\documentclass[12pt,a4paper]{article}
\usepackage{amssymb,amsmath,amsthm}
\usepackage{graphicx,color}
\usepackage{hyperref}
\usepackage[dvipsnames]{xcolor}

\newcommand\cC{{\mathcal C}}
\newcommand\cD{{\mathcal D}}

\newcommand\cF{{\mathcal F}}

\newcommand{\floor}[1]{\left\lfloor{#1}\right\rfloor}

\newcommand\cH{{\mathcal H}}
\newcommand\cI{{\mathcal I}}

\newcommand\cQ{{\mathcal Q}}

\newcommand\cU{{\mathcal U}}

\theoremstyle{plain}
\newtheorem{theorem}{Theorem}[section]
\newtheorem{lemma}[theorem]{Lemma}
\newtheorem{corollary}[theorem]{Corollary}

\newtheorem{proposition}[theorem]{Proposition}

\theoremstyle{definition}

\newtheorem{defn}[theorem]{Definition}

\newtheorem{claim}[theorem]{Claim}

\newtheorem{question}[theorem]{Question}
\newtheorem*{rem}{Remark}

\newcommand\cref[1]{Corollary~\ref{cor:#1}}

%\def\proof{\noindent{{\sl Proof. }}}
%\def\sqr#1#2{{\vbox{\hrule height.#2pt
%    \hbox{\vrule width.#2pt height#1pt \kern#1pt
%        \vrule width.#2pt}\hrule height.#2pt}}}
%\def\eqed{\sqr53}
% Put \qed at the end of each proof, flush against the full stop.
% you cannot use \qed inside \eqalign,
%\def\qed{%
%    \ifmmode\eqno\eqed
%    \else\nobreak\ \hfill\eqed\medbreak\cFi}

\textheight=8in \textwidth=6.8in \topmargin=0.3in \oddsidemargin=-0.2in
\evensidemargin=0in
\title{Rainbow Ramsey problems for the Boolean lattice}
\begin{document}

\author{Fei-Huang Chang$^a$, D\'{a}niel Gerbner$^b$, Wei-Tian Li$^c$, \\
Abhishek Methuku$^d$, 
D\'{a}niel T. Nagy$^b$, Bal\'{a}zs Patk\'{o}s$^b$, M\'{a}t\'{e} Vizer$^b$ \\
\small $^a$ Division of Preparatory Programs for Overseas Chinese Students, \\
\small National Taiwan Normal University New Taipei City, Taiwan
 \\
\small $^b$Alfr\'ed R\'enyi Institute of Mathematics, Hungarian Academy of Sciences\\
\small $^c$ Department of Applied Mathematics, National Chung Hsing University, Taichung 40227, Taiwan
\\
\medskip
\small $^d$ Central European University, Budapest and \'Ecole Polytechnique F\'ed\'erale de Lausanne  \\
\small {\tt cfh@ntnu.edu.tw}, \texttt{\{gerbner,nagydani,patkos\}@renyi.hu}, \\
\small {\tt weitianli@nchu.edu.tw}, \texttt{\{abhishekmethuku,vizermate\}@gmail.com}
}
\maketitle

\begin{abstract}
We address the following rainbow Ramsey problem: For posets $P,Q$ what is the smallest number $n$ such that any coloring of the elements of the Boolean lattice $B_n$ either admits a monochromatic copy of $P$ or a rainbow copy of $Q$. We consider both weak and strong (non-induced and induced) versions of this problem.
We also investigate related problems on (partial) $k$-colorings of $B_n$ that do not admit rainbow antichains of size $k$.
\end{abstract}
\section{Introduction}

%Rainbow Ramsey for graphs: survey \cite{}. also called constrained Ramsey number or Gallai-Ramsey number

In this paper we consider rainbow Ramsey-type problems for posets. Given posets $P$ and $Q$, we say that $X \subseteq Q$ is a \textit{weak copy} of $P$ if there is a bijection $\alpha:P\rightarrow X$ such that $p\le_P p'$ implies $\alpha(p)\le_Q \alpha(p')$. If $\alpha$ has the stronger property that $p\le_P p'$ holds if and only if $\alpha(p)\le_Q \alpha(p')$, then $X$ is a \textit{strong} or \textit{induced copy of} $P$. A copy $X$ of $P$ is \textit{monochromatic} with respect to a coloring $\phi:Q\rightarrow \mathbb{N}$ if $\phi(q)=\phi(q')$ for all $q,q'\in X$ and \textit{rainbow} if $\phi(q)\neq \phi(q')$ for all $q,q'\in X$. We will be looking for monochromatic and/or rainbow copies of some posets in the Boolean lattice $B_n$, the subsets of an $n$-element set ordered by inclusion. The set of elements of $B_n$ corresponding to sets of the same size is called a \textit{level} of $B_n$.

\begin{defn}

The \textit{weak Ramsey number} $R(P_1,P_2,\dots,P_k)$ is the smallest number $n$ such that for any coloring of the elements of $B_n$ with $k$ colors, say $1,2, \ldots,k$ there is a monochromatic copy of the poset $P_i$ in color $i$ for some $1 \le i \le k$. 
%More formally, any coloring $c:B_n\rightarrow [k]$ admits a weak copy of $P_i$ in color $i$ for some $i$. 
We simply write $R_{k}(P)$ for $R(P_1,P_2,\dots,P_k),$ if $P_1= \ldots = P_k = P$.
We define the \textit{strong Ramsey number} $R^{*}(P_1,P_2,\dots,P_k)$ and $R^{*}_{k}(P)$ for strong copies of posets analogously.\end{defn}

Ramsey theory of posets is an old and well investigated topic, see e.g., \cite{KT1987,T1999}. However the study of Ramsey problems in the Boolean lattice was initiated only recently: Weak Ramsey numbers were studied by Cox and Stolee \cite{CS} and strong Ramsey numbers were investigated by Axenovich and Walzer \cite{AW}. In addition, some results in the latter one were improved by Lu and Thompson \cite{LT}.
\vspace{2mm}

%In this article we introduce and study rainbow Ramsey numbers for Boolean posets. We call a coloring of a poset \textit{rainbow}, if each element of the poset has a different color.
In this article we study rainbow Ramsey numbers for the Boolean lattice. 
%We call a coloring of a poset \textit{rainbow} if each element of the poset has a different color.

\begin{defn}
For two posets $P,Q$ the \textit{weak (or not necessarily induced) rainbow Ramsey number} $RR(P,Q)$ is the minimum number $n$ such that any coloring of $B_n$ admits either a monochromatic weak copy of $P$ or a rainbow weak copy of $Q$.
\textit{Strong (or induced) rainbow Ramsey number} can be defined analogously and is denoted by $RR^{*}(P,Q)$.
\end{defn}

%(other definition (e.g. strong mono/weak rainbow) are also possible if needed...)

\noindent 
Rainbow Ramsey numbers for graphs have been intensively studied (they are sometimes called constrained Ramsey numbers or Gallai-Ramsey numbers), for a recent survey see \cite{FMO}. 
The results on the rainbow Ramsey number for Boolean posets are sporadic \cite{CCLL,JLM}. Nevertheless, the following easy observation connects the (usual) Ramsey numbers to the rainbow Ramsey numbers.

\begin{proposition}\label{easy}
For any pair $P$ and $Q$ of posets we have

\vspace{3mm}

\textbf{(i)} $RR(P,Q) \ge R_{ |Q|-1}(P)$, and

\vspace{2mm}

\textbf{(ii)} $RR^{*}(P,Q) \ge R^{*}_{ |Q|-1}(P)$.
\end{proposition}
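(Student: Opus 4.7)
The strategy is the standard ``use one fewer color than needed for a rainbow copy of $Q$'' argument. My plan is to directly construct, for any $n$ strictly below $R_{|Q|-1}(P)$, a coloring of $B_n$ that avoids both a monochromatic weak copy of $P$ and a rainbow weak copy of $Q$; this immediately forces $RR(P,Q) \ge R_{|Q|-1}(P)$.

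More precisely, set $n = R_{|Q|-1}(P) - 1$. By the very definition of the weak Ramsey number $R_{|Q|-1}(P)$, there exists a coloring $\phi: B_n \to \{1,2,\ldots,|Q|-1\}$ with no monochromatic weak copy of $P$ in any of the $|Q|-1$ color classes. I would take this $\phi$ as the witnessing coloring for the rainbow Ramsey problem. Since $\phi$ uses at most $|Q|-1$ distinct colors, any subset of $B_n$ of size $|Q|$ — in particular every weak copy of $Q$ — must have a repeated color by the pigeonhole principle, and hence cannot be rainbow. Combined with the absence of monochromatic weak copies of $P$, this shows that $RR(P,Q) > n$, proving part (i).

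Part (ii) is obtained by the same construction, simply substituting ``strong copy'' for ``weak copy'' throughout: pick $n = R^{*}_{|Q|-1}(P) - 1$ and let $\phi$ be a $(|Q|-1)$-coloring of $B_n$ with no monochromatic strong copy of $P$; every strong copy of $Q$ contains $|Q|$ elements and so cannot be rainbow under $\phi$.

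There is really no obstacle here — the argument is a one-line pigeonhole observation, and the only thing to check is that the definitions of weak/strong copy and of the (ordinary) Ramsey numbers line up as stated, which they do by construction. The proposition's usefulness is that it lets one transfer all known lower bounds on $R_k(P)$ and $R^{*}_k(P)$ to rainbow Ramsey lower bounds, and this is presumably why the authors record it at this point.
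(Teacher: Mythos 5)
Your proof is correct and follows exactly the same pigeonhole argument the paper uses: take the witnessing $(|Q|-1)$-coloring for $R_{|Q|-1}(P) > n$ and note that with at most $|Q|-1$ colors no $|Q|$-element set, in particular no weak (or strong) copy of $Q$, can be rainbow.
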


\begin{proof}
To see \textbf{(i)} observe that if a coloring $\phi$ uses at most $|Q|-1$ colors, then clearly it cannot contain a rainbow weak copy of $Q$. Therefore any such coloring showing $R_{ |Q|-1}(P)>n$  also shows $RR(P,Q)>n$. An identical proof with strong copies implies \textbf{(ii)}.
\end{proof}

In this paper, we show many examples of posets $P, Q$ for which the inequality in \textbf{(i)} of Proposition \ref{easy} holds with equality, while in Section 3, we show another example of posets $P, Q$ for which \textbf{(ii)} of Proposition \ref{easy} holds with strict inequality. Unfortunately, we do not know whether there exists posets $P,Q$ for which \textbf{(i)} holds with strict inequality.

\vskip 0.2cm

Many of the tools used in \cite{AW,CS} come from the related Tur\'an-type problem, the so-called forbidden subposet problem. Let us introduce some terminology. For a poset $P$, a family $\cF\subseteq B_n$ of sets is called (induced) \textit{$P$-free} if $\cF$ does not contain a weak (strong) copy of $P$. The size of the largest (induced) $P$-free family in $B_n$ is denoted by $La(n,P)$ ($La^*(n,P)$). For a poset $P$ we denote by $e(P)$ the maximum number $m$ such that for any $n$ the union of any consecutive $m$ levels of $B_n$ is $P$-free. The analogous strong parameter is denoted by $e^*(P)$. The most widely believed conjecture \cite{GL} in the area of forbidden subposet problems states that for any poset $P$ we have $$\lim_{n\rightarrow \infty}\frac{La(n,P)}{\binom{n}{\lfloor n/2\rfloor}}=e(P) \textrm{ and } \lim_{n\rightarrow \infty}\frac{La^*(n,P)}{\binom{n}{\lfloor n/2\rfloor}}=e^*(P).$$ It is worth noting that this conjecture is already wide open for a very simple poset called the diamond poset $D_2$ (defined on four elements $a,b,c,$ and $d$ with relations $ a < b, c$ and $b, c  < d$). See \cite{diamond} for the best known bounds in this direction.

For a family $\cF\subseteq B_n$ of sets, its \textit{Lubell-mass} is $\lambda_n(\cF)=\sum_{F\in \cF}\frac{1}{\binom{n}{|F|}}$. For a poset $P$, we define $\lambda_n(P)$ to be the maximum value of $\lambda_n(\cF)$ over all $P$-free families $\cF\subseteq B_n$ and $\lambda_{max}(P)$ is defined to be $\sup_n\lambda_n(P)$. Its finiteness follows from the fact that every poset $P$ is a weak subposet of $C_{|P|}$ (where $C_l$ denotes the $l$-chain, the totally ordered set of size $l$) and the $k$-LYM-inequality stating that $\lambda_n(\cF)\le k$ for any $C_{k+1}$-free family $\cF\subseteq B_n$. Analogously, $\lambda^*_n(P)$ is the maximum value of $\lambda_n(\cF)$ over all induced $P$-free families $\cF\subseteq B_n$ and $\lambda_{max}^*(P)$ is defined to be $\sup_n\lambda_n^*(P)$. It was proved to be finite by M\'eroueh \cite{M}. 

Observe that, by definition of $e(P)$ and $e^*(P)$, we have $e(P)\le \lambda_n(P)$ and $e^*(P)\le \lambda^*_n(P)$ for every poset $P$ and integer $n\ge e(P)$ or $n\ge e^*(P)$. We say that a poset is \textit{uniformly Lubell-bounded} if $e(P)\ge \lambda_n(P)$ holds for all positive integers $n$. Similarly, a poset is \textit{uniformly induced Lubell-bounded} if $e^*(P)\ge \lambda^*_n(P)$ holds for all positive integers $n$. 
An instance of posets eqipped with this property is the class of chain posets $C_l$. 
For $k \ge 2$ the \textit{generalized diamond} poset $D_k$ consists of $k+2$ elements $a,b_1,b_2,\dots,b_k,c$ with relations $a<b_i<c$ for $1\le i\le k$.
%$a$ being the smallest element, $c$ being the largest element and the $b_i$'s forming an antichain. 
Griggs, Li and Lu \cite{GLL} proved that infinitely many of the $D_k$'s are uniformly Lubell-bounded and Patk\'os \cite{P} proved that an overlapping but distinct and infinite subset of the $D_k$'s is uniformly induced Lubell-bounded.
For more uniformly Lubell-bounded posets, see~\cite{GLi2}.

In \cite{AW} and \cite{CS}, it was observed that if $P$ is uniformly Lubell-bounded or uniformly induced Lubell-bounded, then $R_{k}(P)=k\cdot e(P)$ or $R^{*}_{k}(P)=k\cdot e^*(P)$ holds, respectively.

%\subsection{Our results}

Our main result concerning weak rainbow Ramsey numbers extends the above observation.

\begin{theorem}\label{main}
Let $P$ be a uniformly Lubell-bounded poset and $\cF\subseteq B_n$ be  a family of sets with $\lambda_n(\cF)>e(P)(k-1)$. Then any coloring of $\phi:\cF\rightarrow \mathbb{N}$ admits either a monochromatic weak copy of $P$ or a rainbow copy of $C_k$.
\end{theorem}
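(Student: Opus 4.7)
I would argue by contradiction, assuming $\phi$ admits neither a monochromatic weak copy of $P$ nor a rainbow copy of $C_k$, and bound $\lambda_n(\cF)$ via a rainbow-depth partition.

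For each $F\in\cF$, let $r(F)$ be the length of the longest rainbow chain in $\cF$ whose top element is $F$. Since no rainbow $C_k$ exists, $1\le r(F)\le k-1$, inducing a partition $\cF=\bigcup_{j=1}^{k-1}\cF^{(j)}$ into pairwise disjoint parts $\cF^{(j)}:=\{F\in\cF:r(F)=j\}$. The target is to show $\lambda_n(\cF^{(j)})\le e(P)$ for each $j$; summing then yields
\[
\lambda_n(\cF)=\sum_{j=1}^{k-1}\lambda_n(\cF^{(j)})\le (k-1)e(P),
\]
contradicting the hypothesis.

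To bound $\lambda_n(\cF^{(j)})$, I would aim to show that $\cF^{(j)}$ is $P$-free, after which the uniform Lubell-boundedness of $P$ gives the inequality. The key structural observation is that comparable pairs inside $\cF^{(j)}$ are color-constrained: if $F\subsetneq G$ with $F,G\in\cF^{(j)}$, then $\phi(G)$ must occur on every longest rainbow chain ending at $F$, since otherwise appending $G$ produces a rainbow chain of length $r(F)+1$ terminating at $G$, contradicting $r(G)=r(F)=j$. Using this constraint, one aims to show that any weak copy of $P$ inside $\cF^{(j)}$ is forced to be monochromatic (either directly, or after further refining the partition, or by invoking that connectedness of the comparability graph of $P$ forces the embedding to be monochromatic), which contradicts the assumption that no monochromatic weak $P$ exists.

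The main obstacle I foresee is converting the color constraint on comparable pairs into a genuine monochromatic weak $P$. The constraint does not directly force $\phi(F)=\phi(G)$, so a weak $P$-copy inside $\cF^{(j)}$ could a priori span several colors; overcoming this likely requires either a finer invariant than $r(F)$ alone (for instance, tracking the set of colors occurring on a canonical longest rainbow chain ending at $F$) or a more delicate swap argument that exploits the chain structure specific to uniformly Lubell-bounded posets. A secondary subtlety is the case of disconnected $P$, where a weak $P$-copy might naturally split colors across comparability components; this is typically resolved by noting that any uniformly Lubell-bounded poset must be comparability-connected, since otherwise one can ``stack'' $P_i$-free families in disjoint level ranges to build a $P$-free family of Lubell mass exceeding $e(P)$.
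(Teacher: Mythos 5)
Your decomposition into rainbow-depth classes $\cF^{(j)}$ is a natural idea, but the crucial intermediate claim --- that each $\cF^{(j)}$ is $P$-free whenever $\phi$ has no monochromatic weak $P$ --- is simply false, and you have correctly identified that the color constraint on comparable pairs is too weak to force it. Here is a concrete counterexample with $P=C_3$ (which is uniformly Lubell-bounded by the $2$-LYM inequality, $e(C_3)=2$) and $k=3$. Take $\cF=\{W,X,Y,Z\}$ with $W\subsetneq X\subsetneq Y\subsetneq Z$ and colors $\phi(W)=1,\phi(X)=2,\phi(Y)=1,\phi(Z)=2$. Each color class is a chain of length $2$, so there is no monochromatic weak $C_3$; no $3$-chain receives three distinct colors, so there is no rainbow $C_3$. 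However $r(W)=1$ while $r(X)=r(Y)=r(Z)=2$, so $\cF^{(2)}=\{X,Y,Z\}$ is itself a chain of length $3$, i.e.\ a weak copy of $C_3$. Your color constraint is satisfied throughout (for instance $\phi(Z)=2$ lies on the unique longest rainbow chain $\{X,Y\}$ ending at $Y$), which shows that neither the constraint nor the proposed refinement by color sets can rescue $P$-freeness: here every longest rainbow chain ending at any of $X,Y,Z$ uses exactly the colors $\{1,2\}$, so the refined classes coincide with $\cF^{(2)}$. Consequently the per-class bound $\lambda_n(\cF^{(j)})\le e(P)$ does not follow from uniform Lubell-boundedness, and the argument does not close.

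The paper's proof avoids this obstruction by working chain-by-chain rather than set-by-set. It uses the Griggs--Li identity $\lambda_n(\cF)=\sum_{F\in\cF}\tfrac{|\mathbf{C}_{n,F}|}{n!}\lambda_{|F|}(\cD_F\cap\cF)$ coming from the max-partition of maximal chains, picks a single $F$ with $\lambda_{|F|}(\cD_F\cap\cF)>e(P)(k-1)$, peels off the subfamily of $\cD_F$ sharing $F$'s color (which has Lubell mass at most $e(P)$ unless it contains a monochromatic $P$), and applies induction on $k$ to what remains inside $\cD_F$, extending the resulting rainbow $C_{k-1}$ by $F$. The point is that the induction localizes to a down-set $\cD_F$ where the top color is fixed, so the ``only lose $e(P)$ per step'' accounting is done along a single descending sequence rather than simultaneously across a global partition. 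If you want to salvage a decomposition-style argument, you would have to find a different invariant whose classes are provably $P$-free, and the example above suggests that no invariant built only from rainbow-chain lengths or color multisets ending at a set can work.
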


As $\lambda_n(B_n)=n+1$, one direction of the following equality is a direct consequence of Theorem~\ref{main}.

\begin{corollary}\label{cormain}
If $P$ is uniformly Lubell-bounded, then $RR(P,Q)=e(P)(|Q|-1)$ holds for any poset $Q$.
\end{corollary}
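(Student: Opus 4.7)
The plan is to prove the two matching inequalities separately. For the upper bound $RR(P,Q)\le e(P)(|Q|-1)$, I would, as the excerpt already signals, apply \tref{main} directly: set $n=e(P)(|Q|-1)$, $\cF=B_n$, and $k=|Q|$. The hypothesis is satisfied since $\lambda_n(B_n)=n+1=e(P)(|Q|-1)+1>e(P)(k-1)$, so for every coloring of $B_n$ one obtains either a monochromatic weak copy of $P$ or a rainbow weak copy of $C_{|Q|}$. In the second case, any linear extension of $Q$ realises $Q$ as a weak subposet of $C_{|Q|}$, so transporting this embedding through the rainbow chain yields a rainbow weak copy of $Q$.

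For the lower bound $RR(P,Q)\ge e(P)(|Q|-1)$, I would combine \pref{easy}(i) with the easy half of the identity $R_k(P)=k\cdot e(P)$ recorded just before \tref{main}. Take $n=e(P)(|Q|-1)-1$, so that $B_n$ has $n+1=(|Q|-1)\cdot e(P)$ levels, which partition into $|Q|-1$ consecutive blocks of $e(P)$ levels each. Assigning color $i\in\{1,\dots,|Q|-1\}$ to all sets whose size falls in the $i$-th block yields a $(|Q|-1)$-coloring in which every color class is a union of $e(P)$ consecutive levels, hence $P$-free by the very definition of $e(P)$. Thus no monochromatic weak $P$ appears, giving $R_{|Q|-1}(P)>n$; invoking \pref{easy}(i) then forces $RR(P,Q)\ge R_{|Q|-1}(P)\ge e(P)(|Q|-1)$.

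I do not expect any genuine obstacle here: the statement really is a corollary of \tref{main} and \pref{easy}. The two minor points that warrant a line of verification are the off-by-one count confirming that $|Q|-1$ blocks of $e(P)$ levels exactly exhaust the levels of $B_{e(P)(|Q|-1)-1}$, and the routine chain-embedding step used to convert the rainbow $C_{|Q|}$ produced by \tref{main} into a rainbow weak copy of $Q$.
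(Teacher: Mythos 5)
Your proposal is correct and matches the paper's argument: the upper bound comes from applying Theorem~\ref{main} with $\cF=B_n$, $n=e(P)(|Q|-1)$, $k=|Q|$, noting $\lambda_n(B_n)=n+1$ and that $Q$ is a weak subposet of $C_{|Q|}$; the lower bound comes from the consecutive-level $(|Q|-1)$-coloring of $B_{e(P)(|Q|-1)-1}$. The only cosmetic difference is that the paper argues the lower bound directly (the coloring uses $|Q|-1$ colors so avoids rainbow $Q$, and its color classes are $P$-free by the definition of $e(P)$), whereas you route it through Proposition~\ref{easy}(i) and the identity $R_{|Q|-1}(P)=e(P)(|Q|-1)$, which is logically equivalent.
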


Let $n=(|Q|-1)e(P)-1$.  The lower bound $RR(P,Q)>n$ in Corollary \ref{cormain} follows from coloring $B_n$ so that the color classes form a partition of the levels of $B_n$ into $|Q|-1$ intervals, each of size $e(P)$. As we use only $|Q|-1$ colors, we avoid rainbow copies of $Q$ and by definition of $e(P)$ we avoid monochromatic copies of $P$.

For strong copies the same coloring yields the same lower bound  $RR^{*}(P,Q)\ge e^{*}(P)(|Q|-1)$, 
but one can easily observe that in most cases this trivial lower bound can be improved by slightly modifying the above coloring: 
If $Q$ does not have a unique smallest element, then one can color $\emptyset$ with an otherwise unused color $i$. Since no other sets are colored $i$ it does not help creating a strong monochromatic copy of $P$, and since $Q$ does not have a unique smallest element, it does not help creating a strong rainbow copy of $Q$. Therefore one can introduce the following function. For any poset $Q$ let $f(Q)=0$ if $Q$ has both a unique largest and a unique smallest element, let $f(Q)=2$, if $Q$ has neither largest nor  smallest element, and let $f(Q)=1$ otherwise. One obtains $RR^{*}(P,Q)\ge e^*(P)(|Q|-1)+f(Q)$ for all posets $P$ and $Q$. 
For this lower bound,
the strong version of Corollary~\ref{cormain} would be expected for $P$ being uniformly induced Lubell-bounded. 
Nonetheless, we will show the above inequality is strict when $P=C_2$, the chain of two elements, and $Q=A_k$, the antichain of size $k$ in Section 3. So we ask the following question.

\begin{question}\label{firstQ}
For which uniformly induced Lubell-bounded posets $P$, one has 
\begin{equation}\label{RRPQ}
RR^{*}(P,Q)=e^*(P)(|Q|-1)+f(Q)
\end{equation}for every poset $Q$?
\end{question}

Despite the above counterexample to Equality (\ref{RRPQ}), we prove that it holds for most uniformly induced Lubell-bounded posets $P$ and $Q=A_3$. Indeed, we have a general upper bound for $RR^{*}(P,A_k)$ 
for any poset $P$ and $k\ge 2$.

\begin{theorem}\label{genstrong}
Given an integer $k\ge 2$, let $m_k=\min\{m: \binom{m}{\lfloor m/2\rfloor }\ge k\}$. 
For any poset $P$ we have $$RR^{*}(P,A_k)\le \lfloor(k-1)\lambda^*_{max}(P)\rfloor+m_k.$$
Moreover, if $P$ is not $C_1$ or $C_2$, then  
we have \[RR^{*}(P,A_3)\le \lfloor2\lambda^*_{max}(P)\rfloor+2.\] 
\end{theorem}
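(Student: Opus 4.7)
Let $n=\lfloor(k-1)\lambda^*_{max}(P)\rfloor+m_k$ and let $\phi\colon B_n\to\mathbb{N}$ be a coloring of $B_n$ without a monochromatic induced copy of $P$; the goal is to produce a rainbow antichain of size $k$. Write $\cF_c=\phi^{-1}(c)$; induced $P$-freeness of each color class gives $\lambda_n(\cF_c)\le\lambda^*_{max}(P)$ for every color $c$. Suppose for contradiction that no rainbow antichain of size $k$ exists. Since every level $L_i\subseteq B_n$ is an antichain, any $L_i$ with $\binom{n}{i}\ge k$ that used $k$ distinct colors would already furnish a rainbow antichain of size $k$, and levels with $\binom{n}{i}<k$ trivially use fewer than $k$ colors, so every level uses at most $k-1$ distinct colors under $\phi$. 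Let $C_i\subseteq\mathbb{N}$ be those colors on $L_i$, so $|C_i|\le k-1$.

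The heart of the argument is to promote this per-level bound to a global $(k-1)$-coloring. For each level $i$ I would pick an injection $\sigma_i\colon C_i\hookrightarrow\{1,\dots,k-1\}$ and set $\psi(x)=\sigma_{|x|}(\phi(x))$. Then each class $\psi^{-1}(j)$ meets each level in at most one original $\phi$-color class. If the $\sigma_i$ can be chosen so that every class $\psi^{-1}(j)$ remains induced $P$-free, then
\[n+1=\lambda_n(B_n)=\sum_{j=1}^{k-1}\lambda_n\bigl(\psi^{-1}(j)\bigr)\le(k-1)\lambda^*_{max}(P),\]
contradicting $n+1\ge(k-1)\lambda^*_{max}(P)+m_k>(k-1)\lambda^*_{max}(P)$. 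To construct the $\sigma_i$'s I would proceed greedily from the bottom level upward, showing at each step that at least one extension preserves the invariant ``every class is induced $P$-free on the processed levels,'' with any obstruction tracing back to either a monochromatic induced $P$ under $\phi$ (forbidden) or a genuine rainbow antichain of size $k$ (also forbidden).

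The main obstacle is precisely controlling cross-level induced copies of $P$ that could be created by merging: a single bad identification of two $\phi$-colors on two distant levels can spawn an induced $P$ spanning several levels. A fallback is a probabilistic construction picking each $\sigma_i$ uniformly and independently among injections into $\{1,\dots,k-1\}$ and bounding the expected number of induced $P$-copies that become monochromatic under $\psi$ but were not under $\phi$; this estimate should use that every $\cF_c$ has Lubell mass at most $\lambda^*_{max}(P)$ and that each level uses at most $k-1$ colors. An alternative route is induction on $k$: remove a color class $\cF_c$ of minimum Lubell mass, invoke the statement for $k-1$ on the residual family to produce a rainbow $A_{k-1}$, and use the extra Lubell slack in $B_n$ to extend by an element of color $c$ incomparable to all its members.

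For the moreover statement ($Q=A_3$ and $P\notin\{C_1,C_2\}$), the exclusion of $C_1,C_2$ yields $\lambda^*_{max}(P)\ge 2$. I would exploit the fact that no antichain of size $3$ can contain $\emptyset$ or $[n]$, since both are comparable to every element, so these two elements are irrelevant for producing a rainbow $A_3$; running the main argument on the sub-poset $B_n\setminus\{\emptyset,[n]\}$ of Lubell mass $n-1$ and absorbing the boundary via $\lambda^*_{max}(P)\ge 2$ saves exactly one unit and gives the improved bound $\lfloor 2\lambda^*_{max}(P)\rfloor+2$.
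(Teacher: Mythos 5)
Your core idea --- observe that every level carries at most $k-1$ colors, then merge colors level-by-level into a global $(k-1)$-coloring $\psi$ whose classes stay induced $P$-free, and contradict via Lubell mass --- has a genuine gap that you yourself flag: there is no mechanism to prevent the merging from creating a monochromatic induced $P$ spanning several levels. Neither the greedy nor the probabilistic fallback is carried out, and it is not clear either can be made to work: an induced copy of $P$ can span many levels, so whether it becomes monochromatic under $\psi$ depends on choices made at all of those levels simultaneously, and the union bound you'd need (over all potential induced $P$'s) is not obviously controllable by per-class Lubell mass alone. Already for $P=C_2$ the merging requirement forces each $\psi^{-1}(j)$ to be an antichain across levels, which is a very strong global constraint on the $\sigma_i$'s with no apparent reason to be satisfiable. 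The induction-on-$k$ fallback also doesn't close: after deleting a color class you have a partial coloring of $B_n$, not a coloring of a smaller cube, so the statement you'd want to invoke isn't the one being proved, and there is no guarantee that a set of the deleted color is incomparable to all $k-1$ members of the rainbow $A_{k-1}$ you found.

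The paper's proof avoids the merging issue entirely. It fixes a set $S\subseteq[m_k]$ and looks at the fiber $\cQ_S=\{S\cup T: T\subseteq[N]\setminus[m_k]\}$, which is a copy of $B_{N-m_k}$. If $\phi$ uses at most $k-1$ colors on $\cQ_S$, then (since $\lambda(B_{N-m_k})=N-m_k+1>(k-1)\lambda^*_{max}(P)$) one color class there has Lubell mass $>\lambda^*_{max}(P)$ and hence a monochromatic induced $P$ --- no merging needed, because the $k-1$ colors are already $\phi$-colors on a single subcube. So each fiber sees at least $k$ colors. Now take $k$ sets $S_1,\dots,S_k$ from the middle layer of $B_{m_k}$ (possible by the definition of $m_k$); the fibers $\cQ_{S_1},\dots,\cQ_{S_k}$ are pairwise mutually incomparable, and one greedily picks one set of a fresh color from each to build a rainbow $A_k$. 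For the ``moreover'' part your instinct (discard $\emptyset,[n]$ and use $\lambda^*_{max}(P)\ge2$) points in the right direction, but the paper needs a further case analysis on $B_N^-$ using two fibers $\cQ_1,\cQ_2$: handling $\|\psi\|\le 2$, then showing that if each fiber carries $\ge 3$ colors one can find either two same-level incomparable sets of distinct colors in $\cQ_1$ plus a third color in $\cQ_2$, or else a monochromatic $P$. Your sketch doesn't reach that.
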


Since $\lambda^{*}_{max}(P)=e^{*}(P)$ for every uniformly induced Lubell-bounded poset $P$, 
we have the next corollary immediately from the moreover part of Theorem \ref{genstrong}.

\begin{corollary}\label{strong}
For every uniformly induced Lubell-bounded poset $P$ other than $C_1$ or $C_2$ we have 
\[RR^{*}(P,A_3)=2+2e^*(P).\]
\end{corollary}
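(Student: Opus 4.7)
The plan is to obtain Corollary~\ref{strong} as a one-line sandwich: match the upper bound delivered by Theorem~\ref{genstrong} against the general lower bound $RR^{*}(P,Q)\ge e^{*}(P)(|Q|-1)+f(Q)$ recorded just before Question~\ref{firstQ}.

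For the upper bound, I would apply the moreover part of Theorem~\ref{genstrong} to the poset $P$, which is permissible exactly because the hypothesis excludes $C_1$ and $C_2$. This gives
\[
RR^{*}(P,A_3)\le \lfloor 2\lambda^{*}_{\max}(P)\rfloor +2.
\]
Now I would unpack the definition of \emph{uniformly induced Lubell-bounded}: by assumption, $e^{*}(P)\ge \lambda^{*}_{n}(P)$ for every $n$, hence $e^{*}(P)\ge \lambda^{*}_{\max}(P)$; combined with the reverse inequality $e^{*}(P)\le \lambda^{*}_{n}(P)$ valid for $n\ge e^{*}(P)$ (observed in the paragraph introducing $\lambda_{\max}^{*}$), this forces $\lambda^{*}_{\max}(P)=e^{*}(P)$. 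Since $e^{*}(P)$ is an integer, the floor drops out and we obtain $RR^{*}(P,A_3)\le 2e^{*}(P)+2$.

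For the lower bound, I would recall that $A_3$ has neither a unique minimum element nor a unique maximum element, so $f(A_3)=2$. The construction described in the paragraph preceding Question~\ref{firstQ}---partitioning the levels of $B_{n}$ into $|Q|-1=2$ intervals of length $e^{*}(P)$ and using an extra color only on $\emptyset$ and an extra color only on the top set---yields
\[
RR^{*}(P,A_3)\ge e^{*}(P)\,(|A_3|-1)+f(A_3)=2e^{*}(P)+2.
\]
Combining the two bounds proves the corollary. There is no substantial obstacle here: the entire content lies in Theorem~\ref{genstrong}, and the role of the hypothesis that $P\not\in\{C_1,C_2\}$ is precisely to make the sharper ``moreover'' clause of that theorem applicable.
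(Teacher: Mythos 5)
Your proposal is correct and follows essentially the same route the paper takes: the paper derives Corollary~\ref{strong} in one line by noting that $\lambda^{*}_{\max}(P)=e^{*}(P)$ for uniformly induced Lubell-bounded $P$ and plugging this into the ``moreover'' clause of Theorem~\ref{genstrong}, with the matching lower bound $RR^{*}(P,Q)\ge e^{*}(P)(|Q|-1)+f(Q)$ already recorded before Question~\ref{firstQ}. You have merely spelled out the two small details the paper leaves implicit (the verification of $\lambda^{*}_{\max}(P)=e^{*}(P)$ from the definitions, and the evaluation $f(A_3)=2$), both of which you handle correctly.
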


%We will mostly study $RR^{*}(P,Q)$ for $Q$ being the antichain $A_k$ of size $k$.

%\begin{theorem}\label{strong}
%For any poset $P$ with $\lambda^*_{max}(P)\ge 2$, we have $$RR^{*}(P,A_3)\le 2+ 2 \lceil\lambda^*_{max}(P)\rceil.$$ In particular, for every  uniformly induced Lubell-bounded poset $P$ we have $$2+2e^*(P)= RR^{*}(P,A_3).$$
%\end{theorem}

%\begin{theorem}\label{strong}
%For any poset $P$ other than $C_1$ or $C_2$ , 
%we have \[RR^{*}(P,A_3)\le 2+  \lfloor2\lambda^*_{max}(P)\rfloor.\] 
%In particular, for every uniformly induced Lubell-bounded poset $P$ other than $C_1$ or $C_2$ we have 
%\[RR^{*}(P,A_3)=2+2e^*(P).\]
%\end{theorem}

%We obtain the following general upper bound.

%\begin{theorem}\label{genstrong}
%For any integer $k\ge 2$ let $m_k=\min\{m: \binom{m}{\lfloor m/2\rfloor }\ge k\}$. Then we have $$RR^{*}(P,A_k)\le \lfloor\lambda^*_{max}(P)(k-1)\rfloor+m_k.$$
%\end{theorem}

%Observe that $m_k=\Theta(\log k)$ and we were not able to improve on this gap between the lower and upper bound even for chains. In particular, the value of $RR^{*}(P_2,A_k)$ is still unknown. It is between $k+2$ and $k+m_k$. 

\vskip0.2cm

\textbf{Structure of the paper.} The remainder of the paper is organized as follows: Theorem \ref{main} and other results on weak copies are proved in Section 2. Section 3 contains the proofs of the counterexample to Equation (\ref{RRPQ}) and Theorem \ref{genstrong}. In Section 4, we introduce four extremal functions $F$, $F'$, $G$ and $G'$ related to colorings of $B_n$ and obtain some bounds on their values.

\vskip0.2cm

\textbf{Notation.} For a set $F$ we write $\cU_F=\cU_{n,F}=\{G\subseteq [n]:F\subseteq G\}$, $\cD_F=\cD_{n,F}=\{G\subseteq [n]:G\subseteq F\}$, $\cI_F=\cI_{n,F}=\cU_{n,F}\cup \cD_{n,F}$. 
For sets $F\subseteq H$ we write $B_{F,H}=\{G: F\subseteq G\subseteq H\}$. 
For integers $0\le a\le b\le n$ we write $\lambda_n(B_{a,b})=\lambda_n(B_{F,H})$ for some $F\subseteq H\subseteq [n]$ with $|F|=a,|H|=b$. 
Let $B^{-}_n$ and $B^{-}_{F,H}$ denote the \emph{truncated} Boolean lattices obtained by removing the smallest and the largest element of the cubes $B_n$ and $B_{F,H}$ respectively. For a coloring $\phi:B_n\rightarrow \mathbb{Z}^+$ let $\|\phi\|$ denote the number of colors used by $\phi$. For a coloring $\phi: B_n\rightarrow \mathbb{Z}^+$ and a positive integer $i$ let $\cH_i=\cH_{\phi,i}=\{F\subseteq [n]: \phi(F)=i\}$.
%For any cube $B_n$, $B_{F,H}$ we add an upper index $^-$ to denote what is left after removing the smallest and the largest element of the cube.
We use $\binom{n}{\le k}$ to denote $\sum_{j=0}^k\binom{n}{j}$. All logarithms are of base 2 in this paper.

\section{Weak copies}

% Let us note that $e(D_k)=\max\{m:2^{m-1}-2<k\}$ and $e^*(D_k)=\max\{m:\binom{m-1}{\lfloor \frac{m-1}{2}\rfloor}<k\}$.

% \begin{proof}[Proof of Theorem \ref{main}  for the special case $P=D_k$, $Q=D_l$ with $k\ge 3$  and $\frac{k}{\log k}\le (l+1)$.] 

% Let us write $N=e(D_k)(l+1)$. By Proposition \ref{easy} \textbf{(i)}, it is enough to prove that any coloring $c:B_N\rightarrow \mathbb{Z}^+$ either admits a monochromatic $D_k$ or a rainbow $D_l$. 

In this section, we prove Theorem \ref{main} and some other results on weak Ramsey and rainbow Ramsey numbers. We start with a couple of definitions. 

Let us denote by $\mathbf{C}_n$ the set of all maximal chains in $B_n$. For a family $\cF\subseteq B_n$ and set $F\in \cF$ we define $\mathbf{C}_{n,F}=\mathbf{C}_{n,F,\cF}$ to be the set of those maximal chains $\cC\in \mathbf{C}_n$ for which the largest set of $\cF\cap \cC$ is $F$. Then the \textit{max-partition} of $\mathbf{C}_n$ consists of the blocks $\mathbf{C}_{n,F}$ for each $F\in \cF$ and $\mathbf{C}_{n,-}$ which contains all maximal chains $\cC$  with $\cF\cap \cC=\emptyset$.

 The Lubell mass $\lambda_n(\cF)=\sum_{F\in\cF}\frac{1}{\binom{n}{|F|}}$ is the average number of sets of $\cF$ in a maximal chain $\cC$ chosen uniformly at random from $\mathbf{C}_n$. As observed by  Griggs and Li \cite{GLi} if we condition on the largest set $F$ in $\cF\cap \cC$, then we obtain
$$\lambda_n(\cF)=\sum_{F\in\cF}\frac{|\mathbf{C}_{n,F}|}{n!}\lambda_{|F|}(\cD_F\cap \cF).$$

\begin{proof}[Proof of Theorem \ref{main}]
We proceed by induction on $k$. The base case $k=1$ is trivial as any colored set forms a ``rainbow'' copy of $C_1$. Suppose the statement is proven for $k-1$ and let $\cF\subseteq B_n$ be a family of sets with $\lambda_n(\cF)>e(P)(k-1)$. Let us fix a coloring $\phi:\cF\rightarrow \mathbb{N}$ and let us consider the max-partition $\{\mathbf{C}_{n,F}: F\in \cF\}\cup \{\mathbf{C}_{n,-}\}$. Using  $$\lambda_n(\cF)=\sum_{F\in\cF}\frac{|\mathbf{C}_{n,F}|}{n!}\lambda_{|F|}(\cD_F\cap \cF)$$ we obtain a set $F\in \cF$ with $\lambda_{|F|}(\cD_F\cap \cF)>e(P)(k-1)$. Let $\cF_1=\{G\in \cD_F: \phi(G)=\phi(F)\}$. If $\cF_1$ contains a weak copy of $P$, then we are done as, by definition, $\cF_1$ is monochromatic. Otherwise, as $P$ is uniformly Lubell-bounded, we have $\lambda_{|F|}(\cF_1)\le e(P)$ and thus
\[
\lambda_{|F|}((\cD_F\cap \cF)\setminus\cF_1)> e(P)(k-1)-e(P)=e(P)(k-2).
\]
Applying our inductive hypothesis to $(\cD_F\cap \cF)\setminus \cF_1$ we either obtain a monochromatic weak copy of $P$ or a rainbow copy of $C_{k-1}$. As all sets in $(\cD_F\cap \cF)\setminus \cF_1$ are colored differently than $F$, we can extend the rainbow copy of $C_{k-1}$ to a rainbow copy of $C_k$ by adding $F$.
\end{proof}

\begin{rem}
Note that a simple modification of the above proof shows that if $P$ is a uniformly induced  Lubell-bounded poset and $\cF\subseteq B_n$ is a family of sets with $\lambda_n(\cF)>e^*(P)(k-1)$, then any coloring of $\phi:\cF\rightarrow \mathbb{N}$ admits either a monochromatic strong copy of $P$ or a rainbow copy of $C_k$, and therefore $RR^{*}(P,C_k)=e^*(P)(k-1)$ holds.
\end{rem}
%mentioning that this works in the strong case as well.

The equality in Proposition~\ref{easy} (i) holds for uniformly Lubell-bounded posets $P$ and any posets $Q$. To find posets $P$ and $Q$ with $RR(P,Q)>R_{|Q|-1}(P)$, we have to choose a non-uniformly Lubell-bounded poset as $P$. However, regardless of $P$, Proposition~\ref{easy} (i) still holds with equality
if  $Q$ is one of the following posets:
For $r\ge 2$ the {\em $r$-fork} poset $V_r$ consists of a minimal element and $r$ other elements that form an antichain. Similarly, for $s\ge 2$ {\em the $s$-broom} poset $\Lambda_s$ consists of a maximal element and $s$ other elements that form an antichain.

%In the following we show that it holds for any poset $P$ and  $Q=V_r$ or $\Lambda_s$. 

\begin{proposition}
For any poset $P$ we have 

\textbf{(i)} $RR(P,V_r)=R_{r}(P)$, and

\textbf{(ii)} $RR(P,\Lambda_s)=R_{s}(P)$.
\end{proposition}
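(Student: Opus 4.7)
The lower bounds $RR(P,V_r)\ge R_r(P)$ and $RR(P,\Lambda_s)\ge R_s(P)$ are immediate from Proposition~\ref{easy}~(i), since $|V_r|-1=r$ and $|\Lambda_s|-1=s$. The plan is therefore to prove the matching upper bounds via a simple dichotomy on the number of colors used.

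For (i), set $n=R_r(P)$ and let $\phi:B_n\to\mathbb{N}$ be any coloring. If $\phi$ uses at most $r$ distinct colors, then by the definition of $R_r(P)$ some color class already contains a monochromatic weak copy of $P$, and we are done. Otherwise $\phi$ uses at least $r+1$ colors; the key observation is that $\emptyset$ is below every other element of $B_n$, so it can serve as the minimum of a weak copy of $V_r$. Since $\phi(\emptyset)$ accounts for only one color, at least $r$ of the other colors appear somewhere in $B_n$, necessarily on nonempty sets. Picking one representative $F_c$ of each such color $c$ yields $\emptyset\subsetneq F_c$ for every $c$, and the $r+1$ sets $\{\emptyset\}\cup\{F_c\}$ are pairwise distinctly colored and form a weak copy of $V_r$.

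The proof of (ii) is completely dual: using the same dichotomy on the number of colors, replace $\emptyset$ by $[n]$, which lies above every other element of $B_n$ and can therefore act as the maximum of a weak copy of $\Lambda_s$. If $\phi$ uses at most $s$ colors we get a monochromatic weak copy of $P$; if $\phi$ uses at least $s+1$ colors, then at least $s$ of them differ from $\phi([n])$, so choosing a representative of each of those colors from $B_n\setminus\{[n]\}$ and adjoining $[n]$ produces a rainbow weak copy of $\Lambda_s$.

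There is essentially no obstacle here beyond noticing that $V_r$ and $\Lambda_s$ are ``star-shaped'' posets with a unique extreme element, so one free extreme element of $B_n$ (either $\emptyset$ or $[n]$) suffices to complete the rainbow copy whenever the coloring is forced to use many colors. The argument does not require $P$ to be uniformly Lubell-bounded because the $|Q|-1$ lower bound already matches the trivial bound arising from avoiding monochromatic $P$ in $R_{|Q|-1}(P)$.
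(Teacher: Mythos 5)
Your proof is correct and follows essentially the same approach as the paper: lower bound from Proposition~\ref{easy}~(i), then upper bound by observing that a coloring of $B_{R_r(P)}$ using at least $r+1$ colors must yield a rainbow weak $V_r$ anchored at $\emptyset$ (resp.\ a rainbow weak $\Lambda_s$ anchored at $[n]$), while a coloring with at most $r$ (resp.\ $s$) colors already yields a monochromatic weak copy of $P$.
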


\begin{proof}
By Proposition~\ref{easy} $RR(P,V_r)\ge R_{r}(P)$. Let $n=R_{r}(P)$.
Any coloring $\phi:B_n\rightarrow \mathbb{N}$ with $\|\phi\|\ge r+1$ contains a rainbow weak copy of $V_r$: the empty set and one representative from each of any other $r$ color classes.

The proof of (ii) is similar by taking the universal set $[n]$ and one representative from each of any $s$ other color classes if $\|\phi\|\ge s+1$.
%Similarly, any coloring $c:B_n\rightarrow \mathbb{N}$ with $\|c\|\ge s+1$ contains a rainbow weak copy of $\Lambda_s$: the 
\end{proof}

If $P$ and $Q$ are both fork posets, then we have $RR(V_r,V_k)=R_k(V_r)$.
In our next result, we manage to determine this value asymptotically for fixed $r$. 
Let us write $f_k(r)=R_{k}(V_r)$ for simplicity. 
A simple $k$-coloring of $B_n$ avoiding monochromatic weak copies of $V_r$  is to color sets of the same size with the same color, and color classes should consist of consecutive levels. Formally, let $i_1,i_2,\dots, i_k$ be positive integers with $\sum_{j=1}^ki_j=n+1$ and consider the coloring $\phi(F)=h$ if and only if $\sum_{j=1}^{h-1}i_j\le |F|< \sum_{j=1}^hi_j$. (The empty sum equals 0, so $\phi(F)=1$ if and only if $|F|< i_1$ holds.) Let us call such a coloring $\phi$ \textit{consecutive level $k$-coloring} and let us define $g_k(r)$ to be the smallest integer $n$ such that any consecutive level $k$-coloring of $B_n$ admits a monochromatic weak copy of $V_r$. By definition, we have $g_k(r)\le f_k(r)$.

For $c \in (0,1)$ let $h(c)=-c\log c-(1-c)\log(1-c)$, the \textit{binary entropy function}. Note that for $c \in (0,1)$ and $n$ large enough we have $$ \frac{1}{\sqrt{n}} 2^{n h(c)}\le \binom{n}{\lfloor c n \rfloor} \le 2^{n h(c)}.$$
 In the proof we omit floor and ceiling signs for simplicity.

\begin{theorem}\label{weakvee}
For any positive integer $k$ there exists a constant $c_k$ such that %we have 
\[\lim_{r\rightarrow\infty}\frac{g_k(r)}{\log r}=\lim_{r\rightarrow\infty}\frac{f_k(r)}{\log r}=c_k.\]
Moreover, $c_1=1$ and the sequence $\{c_k\}_{k=1}^{\infty}$ satisfies the equality $c_{k+1}h(\frac{c_{k+1}-c_k}{c_{k+1}})=1$ for any $k\ge 1$. 
\end{theorem}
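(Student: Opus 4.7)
The plan is to establish matching asymptotic bounds $g_k(r)\ge (c_k-o(1))\log r$ via an explicit construction and $f_k(r)\le (c_k+o(1))\log r$ via induction on $k$. Combined with the trivial inequality $g_k(r)\le f_k(r)$, both limits are forced to equal the same constant $c_k$, and the recursion $c_{k+1}h((c_{k+1}-c_k)/c_{k+1})=1$ emerges naturally from balancing entropy terms in the construction.

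\emph{Construction for $g_k(r)$.} I would define a consecutive level $k$-coloring as follows: set $s_0=-1$, $s_k=n$, and $s_j=\lfloor c_j\log r\rfloor$ for $1\le j\le k-1$, and assign the $j$-th color (counted from the top) to every set whose size lies in $[n-s_j,\,n-s_{j-1}-1]$. The weak $V_r$-freeness of each color class reduces to controlling, for each $j$, the number of strict supersets within the interval of a bottom-level element of that interval. For $j=1$ this count equals $2^{s_1}-1\le 2^{\lfloor\log r\rfloor}-1<r$, consistent with $c_1=1$. For $j\ge 2$ it equals $\sum_{t=1}^{s_j-s_{j-1}-1}\binom{s_j}{t}$, which via the entropy bound $\binom{s}{m}\le 2^{s\,h(m/s)}$ is at most $r^{1+o(1)}$ by the defining equation of $c_j$; reducing each $s_j$ by a $o(\log r)$ term converts this to a strict inequality. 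Taking $n=s_k$ yields $g_k(r)\ge (c_k-o(1))\log r$.

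\emph{Upper bound on $f_k(r)$.} I would proceed by induction on $k$. The base case $k=1$ is immediate: if $n\ge\lceil\log(r+1)\rceil$, then $\emptyset$ has at least $r$ strict supersets all in the unique color class, producing a monochromatic weak $V_r$. For the inductive step, assume $f_{k-1}(r)\le (c_{k-1}+o(1))\log r$ and let $\phi$ be any $k$-coloring of $B_n$ without monochromatic weak $V_r$, for $n$ slightly above $c_k\log r$. The goal is to find a set $F$ of size $s:=\lceil(c_k-c_{k-1}+o(1))\log r\rceil$ such that the sub-Boolean lattice $[F,[n]]\cong B_{n-s}$ omits at least one of the $k$ colors. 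Since this sub-lattice inherits a $(k-1)$-coloring with no monochromatic weak $V_r$, the inductive hypothesis gives $n-s\le f_{k-1}(r)\le (c_{k-1}+o(1))\log r$, hence $n\le (c_k+o(1))\log r$, the desired contradiction. To locate such an $F$, for each color $c$ consider the downward closure $\cD_c:=\bigcup_{G\in\cH_c}\cD_G$; color $c$ is absent from $[F,[n]]$ exactly when $F\notin\cD_c$. The color $c^*$ of $[n]$ satisfies $\cD_{c^*}=B_n$, but every other color $c$ has $[n]\notin\cD_c$. I would show that for at least one $c\ne c^*$, not every $s$-set lies in $\cD_c$, i.e.\ $|\cD_c\cap\binom{[n]}{s}|<\binom{n}{s}$, using that $\cH_c$ is weak $V_r$-free together with a Kruskal--Katona/entropy-style shadow estimate; the threshold on $s$ is precisely the one dictated by $c_k\,h((c_k-c_{k-1})/c_k)=1$.

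\emph{Main obstacle.} The difficulty lies entirely in the upper bound, specifically in converting the $V_r$-freeness of a single color class into a sharp bound on how many sets at a given low level it dominates from above. The naive Lubell-mass bound $\lambda_n(\cH_c)\le r$ yields only the linear inequality $n+1\le kr$, while the correct logarithmic threshold demands an entropy-accurate shadow estimate matched exactly to the recursion for $c_k$. Ensuring that this estimate is sharp enough for a single $\cD_c$ to fail to cover all $s$-sets (rather than merely in aggregate across the colors) is the delicate technical heart of the argument, and I expect it to be where most of the proof work is concentrated.
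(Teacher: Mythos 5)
Your lower bound argument is essentially the paper's: both construct a consecutive level $k$-coloring whose block sizes are tuned by the entropy recursion, the paper phrasing it as the inductive inequality $g_{k+1}(r)\ge g_k(r)+\max\{a:\binom{a+g_k(r)}{\le a}\le r\}+1$ and you writing the block sizes $s_j-s_{j-1}$ explicitly; either way one gets $g_k(r)\ge (c_k-o(1))\log r$.

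Your upper bound strategy, however, diverges from the paper's and contains a genuine gap at exactly the step you flag as ``the delicate technical heart.'' You want to find an $s$-set $F$ with $s\approx(c_k-c_{k-1})\log r$ such that some color $c\ne c^*$ is absent from $\cU_F$, then apply the inductive hypothesis to the $(k-1)$-coloring on $\cU_F\cong B_{n-s}$. The sketch you give for producing $F$ --- bounding $|\cD_c\cap\binom{[n]}{s}|$ from the weak $V_r$-freeness of $\cH_c$ via a Kruskal--Katona/entropy shadow estimate --- cannot work for a single color class. Weak $V_r$-freeness of $\cH_c$ places no upper bound on $|\cH_c|$ or on its downward closure at level $s$: for instance $\cH_c=\binom{[n]}{s}$ (one full level, an antichain, hence trivially $V_r$-free) has $\cD_c\supseteq\binom{[n]}{s}$, so every $s$-set sees color $c$ above it. So the desired conclusion, if true, must be a \emph{global} statement using the $V_r$-freeness of all $k$ color classes simultaneously, and you give no indication of how such a cross-class argument would go or how the quantity $c_{k-1}$ (which only enters through the induction, not through any one $\cH_c$) would surface in a per-color shadow bound. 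In this sense the proposal is incomplete rather than merely unpolished.

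The paper avoids needing any such ``missing color'' lemma. It fixes $\phi(\emptyset)=k+1$, takes $F$ to be any set of size at most $a^*:=\min\{a:\binom{a+f_k(2r-1)}{\le a}>r\}$ with $\phi(F)\ne k+1$ (if no such $F$ exists, the $\ge r$ small sets all share the color $k+1$ and together with $\emptyset$ form a monochromatic $V_r$), and then considers $B_{F,[N]}$ with the $k$-coloring $\phi'$ obtained by \emph{merging} color $k+1$ into color $\phi(F)$. Since $\phi'$ uses only $k$ colors and $B_{F,[N]}$ has dimension $\ge f_k(2r-1)$, one gets a monochromatic weak $V_{2r-1}$ in $\phi'$; if its color is not $\phi(F)$ it is already monochromatic in $\phi$, and if it is $\phi(F)$ then a pigeonhole on the $2r-1$ top elements (split between original colors $\phi(F)$ and $k+1$) recovers a weak $V_r$ for $\phi$ with apex $F$ or $\emptyset$. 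This yields $f_{k+1}(r)\le f_k(2r-1)+a^*$, and the replacement $r\mapsto 2r-1$ is harmless asymptotically. So the paper's key device is the color-merging reduction with a doubled fork, not a missing-color argument; if you want to complete your route, you would need to prove the missing-color lemma with a genuinely different (global) argument, and it is not clear how.
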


\begin{proof}
The proof is based on the following simple observations.

\begin{claim}\label{veeclaim}
For any $k\ge 1$ and $r\ge 1$ we have

\vspace{2mm}

\textbf{(i)} $f_{k+1}(r)\le f_k(2r-1)+\min\{a:\binom{a+f_k(2r-1)}{\le a}> r\}$,

\vspace{1mm}

\textbf{(ii)} $g_{k+1}(r)\ge g_k(r)+\max\{a:\binom{a+g_k(r)}{\le a}\le r\}+1$.

\end{claim}

\begin{proof}[Proof of the claim]
Let $N=f_k(2r-1)+\min\{a:\binom{a+f_k(2r-1)}{\le a}> r\}$ and let us consider a coloring $\phi:B_N\rightarrow [k+1]$. Without loss of generality we may assume $\phi(\emptyset)=k+1$ for the empty set $\emptyset$. Assume first that there exists a set $F\in B_N$ with $|F|\le \min\{a:\binom{a+f_k(2r-1)}{\le a}> r\}$ and $\phi(F)\neq k+1$. Then consider the $k$-coloring $\phi':B_{F,[N]}\rightarrow [k]$ defined by $\phi'(G)=\phi(G)$, if $\phi(G)\in [k]$ and $\phi'(G)=\phi(F)$ otherwise. As $|F|\le \min\{a:\binom{a+f_k(2r-1)}{\le a}> r\}$, $\phi'$ admits a monochromatic weak copy $C$ of $V_{2r-1}$ in $B_{F,[N]}$. If its color is not $\phi(F)$, then its elements have hte same color in $\phi$, thus $C$ is a monochromatic weak copy of $V_{2r-1}$ with respect to $\phi$. If the color of $C$ is $\phi(F)$ and $C$ contains at least $r$ sets that were colored $k+1$ in the coloring $\phi$, then together with the empty set, they form a monochromatic weak copy of $V_r$ with respect to $\phi$. Otherwise $C$ contains at least $r+1$ sets, including $F$, that were colored $\phi(F)$. %Note that one of these $r+1$ sets may be $F$. Even then, together with $F$ they form a monochromatic weak copy of $V_r$ with respect to $\phi$.
Then $F$ together with other $r$ of them form a monochromatic weak copy of $V_r$ with respect to $\phi$.

Assume next that all sets of size at most $\min\{a:\binom{a+f_k(2r-1)}{\le a}> r\}$ are colored $k+1$. Then the empty set and $r$ other such sets form a monochromatic weak copy of $V_r$. This proves \textbf{(i)}.

To prove \textbf{(ii)}, let us consider a consecutive level $k$-coloring $\psi:B_{g_k(r)-1}\rightarrow [k]$ defined by the positive integers $i_1,i_2,\dots, i_k$ such that $\psi$ does not admit a monochromatic weak copy of $V_r$. We ``add $\max\{a:\binom{a+g_k(r)}{\le a}\le r\}+1$ extra levels", i.e. we let $j_1=\max\{a:\binom{a+g_k(r)}{\le a}\le r\}+1$, and $j_{h+1}=i_h$ for all $1\le h \le k$ and set $N':=\left(\sum_{h=1}^{k+1}j_h\right)-1$. We claim that the corresponding consecutive level $(k+1)$-coloring $\psi'$ does not admit a monochromatic weak copy of $V_r$, which proves \textbf{(ii)}. Indeed, by definition the union of the first $j_1$ layers does not contain $r+1$ sets, so no monochromatic $V_r$ exists in this color. To see the $V_r$-free property of the other color classes, observe that for any set $F$ of size $j_1$, the cube $B_{F,[N']}$ has dimension $g_k(r)-1$, and the consecutive level  $k$-coloring that we obtain by restricting $\psi'$ to $B_{F,[N']}$ is isomorphic to $\psi$. If $G$ is the set corresponding to the bottom element of a copy $C$ of $V_r$, then for a $j_1$-subset $F$ of $G$, the copy $C$ belongs to $B_{F,[N']}$, so it cannot be monochromatic.
\end{proof}
To prove the theorem we proceed by induction on $k$. If one can use only one color, then all colorings are consecutive level $1$-colorings and $B_N$ does not admit a monochromatic $V_r$ if and only if $2^N\le r$, so $g_1(r)=f_1(r)=\lfloor \log r\rfloor +1$ and $c_1=1$.

Assume now that the statement of the theorem is proved for some $k
\ge 1$ and let us fix $\varepsilon>0$. Observe that using Claim  \ref{veeclaim} \textbf{(ii)} and the inductive hypothesis we obtain that for $r$ large enough we have 
\begin{equation}\label{gmax}
g_{k+1}(r)\ge g_k(r)+\max\left\{a:\binom{a+g_k(r)}{\le a}\le r\right\}+1,
\end{equation}
and $(c_k-\varepsilon)\log r\le g_k(r)\le (c_k+\varepsilon)\log r$. We claim that if $d_k$ is the constant that satisfies $(d_k+c_k)h(\frac{d_k}{d_k+c_k})=1$, then the maximum $a$ in Inequality (\ref{gmax}) is at least $(d_k-\varepsilon)\log r$. Indeed, there exist positive constants $c_0$ and $\delta$ such that

$$\binom{(d_k-\varepsilon)\log r+g_k(r)}{\le (d_k-\varepsilon)\log r}  \le \binom{(d_k+c_k)\log r}{\le (d_k-\varepsilon)\log r}\le c_0\binom{(d_k+c_k)\log r}{ (d_k-\varepsilon)\log r}$$

$$\le  c_02^{h(\frac{d_k-\varepsilon}{d_k+c_k})(d_k+c_k)\log r}= c_0r^{h(\frac{d_k-\varepsilon}{d_k+c_k})(d_k+c_k)}\le c_0r^{1-\delta}<r
$$
holds, where for the second inequality we used $d_k<c_k$ and for the penultimate inequality we used that the entropy function is strictly increasing in $(0,1/2)$. Therefore, we have $g_{k+1}(r)\ge (c_k+d_k-2\varepsilon)\log r$.

On the other hand, according to Claim \ref{veeclaim} \textbf{(i)}, we have
\begin{equation}\label{fmax}
f_{k+1}(r)\le f_k(2r-1)+\min\left\{a:\binom{a+f_k(2r-1)}{\le a}> r\right\}.
\end{equation}
By the inductive hypothesis, for sufficiently large $r$ we have 
$$(c_k-\varepsilon)\log r\le f_k(r)\le f_k(2r-1)\le (c_k+\varepsilon)\log (2r-1)\le (c_k+2\varepsilon)\log r.$$ We claim that the minimum $a$ in Inequality (\ref{fmax}) is at most $(d_k+\varepsilon)\log r$. Indeed, for some positive $\delta'$ and large enough $r$ we have

$$
\binom{(d_k+\varepsilon)\log r+f_k(2r-1)}{\le (d_k+\varepsilon)\log r} \ge \binom{(d_k+c_k)\log r}{ (d_k+\varepsilon)\log r}\ge \frac{1}{\sqrt{\log r}}2^{h(\frac{d_k+\varepsilon}{d_k+c_k})(d_k+c_k)\log r}$$

$$= \frac{1}{\sqrt{\log r}}r^{h(\frac{d_k+\varepsilon}{d_k+c_k})(d_k+c_k)}\ge \frac{r^{1+\delta'}}{\sqrt{\log r}}>r.
$$

Therefore, we have $f_{k+1}(r)\le (c_k+d_k+3\varepsilon)\log r$ and consequently
\[
(c_k+d_k-2\varepsilon)\log r\le g_{k+1}(r)\le f_{k+1}(r)\le (c_k+d_k+3\varepsilon)\log r,
\]
showing $c_{k+1}=c_k+d_k$. Plugging back to the defining equation $(d_k+c_k)h(\frac{d_k}{d_k+c_k})=1$ we obtain $c_{k+1}h(\frac{c_{k+1}-c_k}{c_{k+1}})=1$ as claimed.
%$$f_{k+1}(r)-g_{k+1}(r) \le  f_{k}(2r-1)+\min\{a:\binom{a+f_k(2r-1)}{\le a}> r\}-g_{k}(r)-\max\{a:\binom{a+g_k(r)}{\le a}\le r\}-1 $$
%$$\le f_k(2r-1)-g_k(r)= f_k(2r-1)-f_k(r)+f_k(r)-g_k(r)  ,$$
%where we could omit the min and the max term as $g_k(r)\le f_k(r)\le f_k(2r-1)$, so the min term is at most the max term. Moreover, by the induction hypothesis, we have $f_k(2r-1)\le f_k(r)+c_k$. The fact that both the min and the max term are of logarithmic order of magnitude follows easily from the inductive hypothesis.
\end{proof}

Note that Cox and Steele \cite{CS} obtained general but not tight upper bounds on the Ramsey number  $R(V_{r_1},\dots, V_{r_{s}},\Lambda_{r_{s+1}},\dots,\Lambda_{r_t})$. Theorem \ref{weakvee} is an improvement on their result in case all target posets are the same.

\vskip 0.5truecm

\section{Strong copies}

The lower bounds in most of our theorems are obtained via trivial colorings where sets of the same size receive the same color. Let us introduce the following parameters: let $m(P)=\max\{m:B_m$ does not contain a weak copy of $P\}$ and $m^*(P)=\max\{m:B_m$ does not contain a strong copy of $P\}$. We say that $Q\subset B_n$ is \textit{thin} if $Q$ contains at most one set from each level. Also, let 
$r^*(P)=\max\{r: B_r$ does not contain a thin, strong copy of $P\}$. Note that the corresponding weak parameter $r(P)=\max\{r: B_r$ does not contain a thin, weak copy of $P\}$ trivially equals $|P|-2$ as $B_{|P|-1}$ contains a chain of length $|P|$ and thus a weak copy of $P$. 

\vspace{2mm}

In the next proposition we prove some lower bounds using non-trivial colorings. A poset $P$ is said to be \textit{connected} if for any pair $p,q\in P$ there exists a sequence $r_1,r_2,\dots,r_k$ such that $r_1=p,r_k=q$ and $r_i,r_{i+1}$ are comparable for any $i=1,2,\dots,k-1$.

\begin{proposition}\label{conneasy}
If $P$ is a connected poset with $|P|\ge 2$ and $Q$ is an arbitrary poset, then we have

\textbf{(i)} $RR(P,Q)>m(P)+|Q|-2$,

\textbf{(ii)} $RR^{*}(P,Q)> m^*(P)+|Q|-2$,

\textbf{(iii)} $RR^{*}(P,Q)> r^*(Q)$.
\end{proposition}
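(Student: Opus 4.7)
The plan is to prove all three inequalities by exhibiting, for the appropriate $n$, an explicit coloring of $B_n$ that avoids both a monochromatic copy of $P$ and a rainbow copy of $Q$.

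For \textbf{(i)} and \textbf{(ii)} I would use essentially the same construction with $n=m(P)+|Q|-2$ (respectively $n=m^*(P)+|Q|-2$). Fix a partition $[n]=A\cup B$ with $|A|=m(P)$ (respectively $m^*(P)$) and $|B|=|Q|-2$, and color each $F\subseteq[n]$ by $|F\cap B|$. This uses $|Q|-1$ colors, immediately ruling out any rainbow copy of $Q$. For each color $j$, the class $\cH_j=\{F:|F\cap B|=j\}$ decomposes as the disjoint union, indexed by $S\in\binom{B}{j}$, of the intervals $B_{S,S\cup A}$, each of which is isomorphic to $B_{m(P)}$ (respectively $B_{m^*(P)}$). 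Moreover two sets from distinct intervals are incomparable: if $F_1\subseteq F_2$ with $F_i\cap B=S_i$, then $S_1\subseteq S_2$, and equal cardinalities force $S_1=S_2$. Thus the comparability graph of $\cH_j$ splits into the comparability graphs of these intervals. The assumption that $P$ is connected now does the work: in a weak copy $\alpha:P\to X$ the comparability graph of $X$ contains the image of that of $P$, while in a strong copy the two are isomorphic, so in either case $X$ has a connected comparability graph and must lie inside a single interval $B_{S,S\cup A}\cong B_{m(P)}$ (respectively $B_{m^*(P)}$). By the very definitions of these parameters no weak (respectively strong) copy of $P$ fits inside, giving the desired contradiction.

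For \textbf{(iii)} take $n=r^*(Q)$ and color each $F\subseteq[n]$ by $|F|$. Every color class is a single level, hence an antichain; since $P$ is connected with $|P|\ge 2$ it contains at least one comparable pair, so it cannot embed as a strong copy into any antichain. A strong rainbow copy of $Q$ in this coloring would consist of sets of pairwise distinct sizes, i.e.\ a thin strong copy of $Q$, and no such copy exists in $B_{r^*(Q)}$ by definition. The only real subtlety across all three parts is the clean use of connectedness to confine each monochromatic copy of $P$ to a single component of the relevant comparability decomposition; once that observation is in place, the rest of the argument is bookkeeping.
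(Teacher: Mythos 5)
Your proof is correct and takes essentially the same route as the paper: the same coloring $F\mapsto |F\cap B|$ (the paper calls $B$ by the name $R=[|Q|-2]$) for \textbf{(i)}--\textbf{(ii)} and the same coloring by cardinality for \textbf{(iii)}, with the connectedness of $P$ used in both to confine a monochromatic copy to a single block where $P$ cannot embed. The only difference is presentational: you phrase the key step via connected components of the comparability graph of a color class, whereas the paper picks a comparable pair across two blocks and derives a contradiction directly.
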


\begin{proof}
Set $N=m(P)+|Q|-2$, $N^*=m^*(P)+|Q|-2$ and $R=[|Q|-2]$. Consider the colorings $\phi:B_N\rightarrow \{0,1,\dots,|Q|-2\}$ and $\phi^*:B_{N^*}\rightarrow \{0,1,\dots,|Q|-2\}$ defined by $\phi(F)=|F\cap R|$ and $\phi^*(G)=|G\cap R|$. Observe that $\phi$ and $\phi^*$ do not admit a rainbow copy of $Q$ as only $|Q|-1$ colors are used.

By definition of $m(P)$, for any set $T\subseteq R$ the family $\cF_T=\{F\subseteq [N]: F\cap R=T\}$ cannot contain a weak copy of $P$. Thus a monochromatic weak copy of $P$ (admitted by $\phi$) must contain two sets $F, F'$ with $F\in \cF_T$ and $F'\in \cF_{T'}$ such that $|T|=|T'|$ and $T \not = T'$. As $P$ is connected, we can choose $F, F'$ to be comparable. However, since each $F\in \cF_T$ is incomparable to each $F'\in \cF_{T'}$ as $T$ is incomparable to $T'$, this is a contradiction. So the coloring $\phi$ does not admit a monochromatic weak copy of $P$. This proves \textbf{(i)}, and one can prove \textbf{(ii)} in a similar way.

% ($m^*(P)$)($\cG_T=\{G\subseteq N^*: G\cap R=T\}$)(strong) ($G\in \cG_T, G'\in \cG_{T'}$) 
%and $c^*$ does not admit a monochromatic strong copy of $P$. This proves \textbf{(i)} and \textbf{(ii)}.

To see \textbf{(iii)} let us consider the trivial coloring $\phi:B_{r^*(Q)}\rightarrow \{0,1,\dots,r^*(Q)\}$ defined by $\phi(F)=|F|$. As $P$ is connected with $|P|\ge 2$, $\phi$ does not admit a monochromatic copy of $P$ and by definition of $r^*(Q)$, $\phi$ does not admit a rainbow strong copy of $Q$.
\end{proof}

%We say that a family $\cF\subset B_n$ is \textit{thin} if $f_i:=|\cF\cap \binom{n}{i}|\le 1$ for every $0\le i\le n$. 

\begin{proposition}\label{thin} If $n\ge 4$, then $r^*(A_n)=n+1$ holds.
\end{proposition}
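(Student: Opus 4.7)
The plan is to establish $r^*(A_n)\ge n+1$ and $r^*(A_n)\le n+1$ separately.

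For the lower bound, I would show that $B_{n+1}$ contains no thin strong copy of $A_n$. Since $\emptyset$ and $[n+1]$ are comparable to every other set of $B_{n+1}$, neither can belong to a nontrivial antichain, so a thin antichain of size $n$ in $B_{n+1}$ must use exactly the levels $1,2,\dots,n$. Let $\{a\}$ be its size-$1$ member and $B$ its size-$n$ member; incomparability forces $B=[n+1]\setminus\{a\}$. But then the size-$2$ member $C$ must avoid $a$ (being incomparable to $\{a\}$), whence $C\subseteq[n+1]\setminus\{a\}=B$, contradicting incomparability of $C$ and $B$. The assumption $n\ge 4$ ensures that sizes $1$, $2$ and $n$ correspond to three genuinely distinct members.

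For the upper bound I would construct a thin antichain of size $n$ in $B_{n+2}$ by induction on $n\ge 4$. The base case $n=4$ is handled explicitly by the family
$$\{\{6\},\ \{1,2\},\ \{2,3,4\},\ \{1,3,4,5\}\}\subseteq B_6,$$
whose six pairs are immediately checked to be mutually incomparable. For the inductive step, suppose $T_1,\dots,T_n$ is a thin antichain in $B_{n+2}$ with $|T_i|=i$. The complementary family $\bar T_i:=[n+2]\setminus T_i$ is still an antichain, by the duality $A\subseteq B\iff\bar B\subseteq\bar A$, and has sizes $n+1,n,\dots,2$. Embedding $B_{n+2}$ into $B_{n+3}$, I set
$$S_1=\{n+3\},\qquad S_k=\bar T_{n+2-k}\quad\text{for }2\le k\le n+1.$$
Then $|S_k|=k$, the sets $S_2,\dots,S_{n+1}$ lie in $[n+2]$ and form an antichain as above, and $S_1$ is incomparable to each $S_k$ because $n+3\notin S_k$ while $|S_k|\ge 2$. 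This gives a thin antichain of size $n+1$ in $B_{n+3}$, completing the induction.

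The argument is conceptually short; the only non-routine input is the explicit size-$4$ antichain in $B_6$ that seeds the induction. I expect the complement trick to carry the inductive step uniformly, bypassing the need for a closed-form construction at each $n$.
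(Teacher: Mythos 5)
Your proof is correct. The overall plan (showing $B_{n+1}$ has no thin antichain of size $n$, then exhibiting one in $B_{n+2}$ by induction) matches the paper's, and your lower-bound argument is essentially the paper's argument — both hinge on the observations that $\emptyset$ and the full set cannot appear, and that a singleton and a coatom in the antichain must be complementary, then derive a contradiction — though you phrase it as a direct contradiction while the paper states the general bound that any thin antichain in $B_m$ has at most $m-2$ members. The genuine difference is in the upper-bound construction. The paper maintains the invariant ``$B_m$ contains a thin antichain of size $m-2$ with no set of size $m-1$'' and steps from $B_m$ to $B_{m+2}$ by adjoining the new element $m+1$ to every existing member and appending $[m]$ and $\{m+2\}$; the step size of two forces two base cases, $m=4$ and $m=5$. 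You instead maintain the sharper invariant that the antichain occupies levels $1,\dots,n$ exactly, and step by a single dimension using complementation inside $[n+2]$: this reflects the level sequence to $2,\dots,n+1$ and frees level $1$ for the fresh singleton $\{n+3\}$. Your single-step recursion needs only the one base case $n=4$ and avoids carrying the somewhat ad hoc ``avoid level $m-1$'' side condition, making it a slightly cleaner argument; the paper's version, on the other hand, has the minor virtue of keeping all sets inside the original cube rather than passing through complements. Both are perfectly valid.
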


\begin{proof} Let $\cF\subset B_n$ be a thin antichain. Then we claim $|\cF|\le n-2$ holds, which shows $r^*(A_n)\ge n+1$. Indeed, if $\emptyset\in\cF$ or $[n]\in \cF$, then $\cF=\{\emptyset\}$ or $\cF=\{[n]\}$. Also, if both a $1$-element and an $(n-1)$-element sets are in $\cF$, they have to be complements, and then no other sets can be in $\cF$.

For the upper bound we prove the stronger statement that $B_n$ contains a thin antichain of size $n-2$ with $|\cF\cap \binom{[n]}{n-1}|=0$.  We proceed by induction on $n$. The statement is trivial for $n=4$ and $n=5$. Let us assume the statement holds for some $n\ge 4$, and prove it for $n+2$. Hence we can find a thin antichain $\cF$ in $B_n$ that has cardinality $n-2$ and does not contain a set of size $n-1$. Then let $\cF'=\{F\cup \{n+1\}: F\in \cF\}\cup \{[n],\{n+2\}\}$. It is easy to see that $\cF'\subset B_{n+2}$ is a thin antichain of size $n$ without an $(n+1)$-element set.
%and build a thin antichain in $2^{[n+2]}$. Add $n+1$ to each set in $\cF$, take $[n]$ and $\{n+2\}$.
\end{proof}

Proposition \ref{conneasy} and Proposition \ref{thin} together yield $RR^{*}(C_2,A_k)\ge k+2$, which is larger than both $e^*(C_2)(|A_k|-1)+f(A_k)=k+1$ and $R^{*}_{k-1}(C_2)=k-1$, showing that $C_2$ does not possess the property of Question \ref{firstQ} and that there exists a pair of posets for which Proposition \ref{easy} \textbf{(ii)} holds with a strict inequality.

\begin{defn}We say that the families $\cF_1,\cF_2,\dots, \cF_l$ are \textit{mutually comparable} if for any $F_i\in \cF_i$ and $F_j\in \cF$ with $1 \le i< j \le l$ we have $F_i \subseteq F_j$ or $F_j \subseteq F_i$, and they are \textit{mutually incomparable} if for any $F_i\in \cF_i$ and $F_j\in \cF$ with $1 \le i< j \le l$ we have $F_i \not\subseteq F_j$ and $F_j \not\subseteq F_i$.
\end{defn}

\begin{proof}[Proof of Theorem \ref{genstrong}]
Let us write $N=\lfloor\lambda^*_{max}(P)(k-1)\rfloor+m_k$ and let us consider a coloring $\phi:B_N\rightarrow \mathbb{N}$. Observe that if $\phi$ does not admit a monochromatic induced copy of $P$, then for any set $S\subseteq [m_k]$, $\phi$ must admit at least $k$ colors on the family $\cQ_S=\{S\cup T: T\subseteq [N]\backslash[m_k]\}$. Indeed, if there are at most $k-1$ 
colors on some $\cQ_S$, then consider the corresponding coloring $\phi'$ of $B_{N-m_k}$ such 
that $\phi'(\{i_1,i_2,\ldots, i_\ell\})=\phi(S\cup\{i_1+m_k,i_2+m_k,\ldots, i_\ell+m_k\})$ 
for every set $\{i_1,i_2,\ldots, i_\ell\}$ in $ B_{[N-m_k]}$.
Then $\phi'$ is a $(k-1)$-coloring of $B_{N-m_k}$, and one of the color classes has Lubell-mass strictly larger than $\lambda^*_{max}(P)$. So $\phi'$ admits a monochromatic induced copy of $P$ in $B_{N-m_k}$. This implies that $\phi$ admits a monochromatic induced copy of $P$ in $\cQ_S$.

By the definition of $m_k$, we can pick $k$ subsets $S_1,S_2,\dots,S_k$ of $[m_k]$ of size $\lfloor m_k/2\rfloor$. As the $S_i$'s form an antichain, the families $\cQ_{S_1},\cQ_{S_2},\dots,\cQ_{S_k}$ are mutually incomparable. By the above paragraph, on each of these families $\phi$ admits at least $k$ colors otherwise we find a monochromatic induced copy of $P$. But then we can pick a rainbow antichain from the $\cQ_{S_i}$'s greedily: a set $F_1$ from $\cQ_{S_1}$, then $F_2$ from $\cQ_{S_2}$  and so on with $\phi(F_i)\neq \phi(F_j)$ for all $i<j$. This completes the proof of the first part of Theorem \ref{genstrong}.

Now we prove the second part. For any $P$ other than $C_1$ or $C_2$, $\cF=\{\emptyset, [n]\}\subset B_n$ is $P$-free for all $n\ge2$. Hence $\lambda_{max}^*(P)=\sup \lambda_n^*(P)\ge 2$.
Let $N=\lfloor2\lambda^*_{max}(P)\rfloor+2$. %Consider a coloring $\psi:B_N^-\rightarrow \mathbb{N}$ and assume indirectly that it does not admit a monochromatic induced copy of $P$, nor a rainbow copy of $A_3$. 
For any coloring $\psi$ of $B_N^-$, we show that it admits either  a monochromatic induced copy of $P$ or a rainbow copy of $A_3$. 
If $\|\psi\|\le 2$, then $\lambda_N^*(B_N^-)=N-1$ hence one of the color classes has Lubell-mass strictly larger than $\lambda^*_{max}(P)$, so by the definition of $\lambda^*_{max}$, $\psi$ admits a monochromatic induced copy of $P$.

Therefore, we can assume that $\|\psi\|\ge 3$. Let $\cQ_{i}=\{\{i\}\cup T: T\subseteq [N]\setminus [2]\}$  for $i=1,2$. Note that $\cQ_{1}$ and $\cQ_{2}$ are mutually incomparable. 
By the same reasoning as the previous case, if $\psi$ admits only 2 colors on some  $\cQ_{i}$, then we can find a corresponding 2-coloring $\psi'$ of $B_{N-2}$ and a monochromatic copy of $P$ in $B_{N-2}$  with respect to $\psi'$. 
As before, this implies that there is a monochromatic copy of $P$ in $\cQ_{i}$ with respect to $\psi$. Hence we consider the case that $\psi$ admits at least three colors on each $\cQ_{i}$.  
%Without loss of generality, let us observe the colors on $\cQ_{1}$. 
If there are two sets $F_1,F_2\in \cQ_{1}$ of the same size with distinct colors, then a set of third color in $\cQ_{2}$ together with $F_1$ and $F_2$ form a rainbow $A_3$. So we may assume that all subsets of the same size in $\cQ_{1}$ have the same color. Now if all sets in  $\cQ_{1}\setminus\{\{1\},([N]\setminus[2])\cup\{1\}\}$ are of the same color, then the  corresponding coloring $\psi'$ admits only one color on   $B_{N-2}^-$. 
Since $\lambda^*_{max}(P)\ge 2$, we have $\lambda_{N-2}^*(B_{N-2}^-)=
N-3=\lfloor2\lambda^*_{max}(P)\rfloor-1> \lambda^*_{max}(P)$.
Thus, $\psi'$ admits a monochromatic $P$ in $B_{N-2}$ 
and then $\psi$ admits a monochromatic $P$ in $\cQ_{1}$ as well.
If there are at least two colors on $\cQ_{1}\setminus\{\{1\},([N]\setminus[2])\cup\{1\}\}$
and sets of the same size have the same color, then we can easily find two incomparable sets from two levels of distinct colors. The two sets together with a set of third color in $\cQ_{2}$ form a rainbow $A_3$. This completes the proof.  
\end{proof}

\section{$F(n,k)$, $F'(n,k)$, $G(n,k)$ and $G'(n,k)$}

Most of our proofs proceeded along the following lines: suppose $\phi$ is a coloring using at least $c\ge k$ colors that does not admit a rainbow $A_k$, then the union of some $c-k+1$ color classes is ``small''. Therefore it is natural to investigate the following four functions that seem to be interesting in their own right.

\begin{defn}
$F(n,k)$ is the smallest integer $m$ such that any $k$-coloring $\phi:B_n\rightarrow [k]$ admits a strong rainbow copy of $A_k$ provided every color class is of size at least $m$. $G(n,k)$ is the infimum of all reals $\gamma$ such that any $k$-coloring $\phi:B_n\rightarrow [k]$ admits a strong rainbow copy of $A_k$ provided every color class has Lubell-mass at least $\gamma$.

$F'(n,k)$ and $G'(n,k)$ are defined by changing coloring to \textit{partial coloring} (i.e. we only color some subset of the elements of $B_n$) in the definition of $F(n,k)$ and $G(n,k)$.
\end{defn}

We start with calculating the Lubell mass of subcubes of $B_n$.

\begin{lemma}\label{cubes2}
For non-negative integers $a$ and $b$ with $a+b\le n$, we have
$$\lambda_n(B_{a,n-b})=\frac{n+1}{a+b+1}\frac{1}{\binom{a+b}{a}}.$$
\end{lemma}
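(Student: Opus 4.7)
The plan is direct computation, since the quantity is defined as a sum and everything on the right-hand side is explicit. Fix any $F\subseteq H\subseteq[n]$ with $|F|=a$ and $|H|=n-b$, and set $m:=|H\setminus F|=n-a-b$. The sets in $B_{F,H}$ of size $a+j$ correspond to the $j$-subsets of $H\setminus F$, so there are $\binom{m}{j}$ of them, and therefore
$$\lambda_n(B_{F,H}) \;=\; \sum_{j=0}^{m}\frac{\binom{m}{j}}{\binom{n}{a+j}}.$$
So the claim reduces to evaluating this sum in closed form and comparing it with $\tfrac{(n+1)\,a!\,b!}{(a+b+1)!}$, which is the stated right-hand side.

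The key manipulation I would carry out is to pull the natural common factor out of the summand. Using $n-a-j=b+m-j$ together with $(a+j)!/j!=a!\binom{a+j}{a}$ and $(b+m-j)!/(m-j)!=b!\binom{b+m-j}{b}$, a short factorial rearrangement gives
$$\frac{\binom{m}{j}}{\binom{n}{a+j}} \;=\; \frac{a!\,b!\,m!}{n!}\,\binom{a+j}{a}\binom{b+m-j}{b}.$$
What remains is the classical Vandermonde-type convolution
$$\sum_{j=0}^{m}\binom{a+j}{a}\binom{b+m-j}{b} \;=\; \binom{a+b+m+1}{a+b+1} \;=\; \binom{n+1}{a+b+1},$$
proved in one line by classifying $(a+b+1)$-element subsets of an $(a+b+m+1)$-element set by the position of their $(a+1)$-st smallest element. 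Plugging back in and cancelling $m!$ against the $m!$ in $\binom{n+1}{a+b+1}=(n+1)!/((a+b+1)!\,m!)$ collapses everything to $\tfrac{(n+1)\,a!\,b!}{(a+b+1)!}$, which equals $\tfrac{n+1}{a+b+1}\cdot\tfrac{1}{\binom{a+b}{a}}$.

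There is no genuine obstacle: the only non-trivial ingredient is the convolution identity above, which is standard, and the rest is bookkeeping. An alternative route that avoids the identity altogether is to invoke the beta-integral representation $1/\binom{n}{k}=(n+1)\int_0^1 x^k(1-x)^{n-k}\,dx$; swapping sum and integral, the inner sum collapses via the binomial theorem to $x^a(1-x)^b$, and the remaining integral is $B(a+1,b+1)=a!\,b!/(a+b+1)!$, giving the formula at once.
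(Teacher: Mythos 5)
Your proof is correct and essentially the same as the paper's: after the index change $j=i-a$, your factorial rearrangement and the Vandermonde-type convolution $\sum_j \binom{a+j}{a}\binom{b+m-j}{b}=\binom{n+1}{a+b+1}$ (with the same combinatorial proof via the $(a+1)$-st smallest element) coincide with the paper's computation. The beta-integral alternative you sketch at the end is a genuinely different and slicker route, but the main argument you give matches the paper.
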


\begin{proof}
$$\lambda_n(B_{a,n-b})=\sum_{i=a}^{n-b} \frac{\binom{n-a-b}{i-a}}{\binom{n}{i}}=
\frac{(n-a-b)!}{n!}\sum_{i=a}^{n-b} \frac{i!(n-i)!}{(i-a)!(n-b-i)!}
$$
$$=\frac{(n-a-b)!a!b!}{n!}\sum_{i=a}^{n-b} \binom{i}{a} \binom{n-i}{b}=\frac{(n-a-b)!a!b!}{n!} \binom{n+1}{a+b+1}$$
$$=\frac{(n-a-b)!a!b!(n+1)!}{n!(a+b+1)!(n-a-b)!}=\frac{n+1}{a+b+1}\frac{1}{\binom{a+b}{a}}.$$

Here we use the equation $\sum_{i=a}^{n-b} \binom{i}{a} \binom{n-i}{b}=\binom{n+1}{a+b+1}$. This can be proved the following way. The right hand side denotes the number of ways to pick an $(a+b+1)$-element subset $\{x_1,\dots,x_{a+b+1}\}$ of $[n+1]$ with $x_1<x_2<\dots <x_{a+b+1}$. Let us assume $x_{a+1}=i+1$. Then $i$ is between $a$ and $n-b$, there are $\binom{i}{a}$ ways to pick $\{x_1,\dots,x_{a}\}$ and $\binom{n-i}{b}$ ways to pick $\{x_{a+2},\dots,x_{a+b+1}\}$.
\end{proof}

Let us start with the following simple observation that connects the four functions.

\begin{proposition}\label{fourf}
For any $n$ and $k$ we have $F(n,k-1)\le F'(n,k-1)$ and $G(n,k-1)\le G'(n,k-1)$. Furthermore if $F'(n,k-1)\le \frac{2^n}{k}+1$, then $F'(n,k-1)\le F(n,k)$.% and if $G'\le \frac{n+1}{k}$, then  $G'(n,k-1)\le G(n,k)$.
\end{proposition}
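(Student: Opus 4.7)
The first two inequalities are immediate from the definitions, since every full $(k-1)$-coloring of $B_n$ is in particular a partial $(k-1)$-coloring; hence any $m$ that forces a rainbow strong copy of $A_{k-1}$ in the partial (respectively Lubell-mass) setting also forces one in the full setting, giving $F(n,k-1)\le F'(n,k-1)$ and $G(n,k-1)\le G'(n,k-1)$.

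For the nontrivial inequality I would argue by contradiction.  Assume $F'(n,k-1)\le \tfrac{2^n}{k}+1$ but $F(n,k) < F'(n,k-1)$, and set $m' = F'(n,k-1)-1$.  By the definition of $F'(n,k-1)$, there is a partial $(k-1)$-coloring $\phi$ of $B_n$ admitting no strong rainbow copy of $A_{k-1}$ in which every color class has size at least $m'$.  Discarding elements from any class that is too large cannot create a new rainbow antichain (a rainbow $A_{k-1}$ in the trimmed coloring would already be a rainbow $A_{k-1}$ in $\phi$), so I may assume every class of $\phi$ has size exactly $m'$.

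Next I extend $\phi$ to a full $k$-coloring $\phi'$ of $B_n$ by assigning a brand-new color $k$ to every set left uncolored by $\phi$.  The first $k-1$ color classes of $\phi'$ have size $m' \ge F(n,k)$, while the new $k$-th class has size
\[
2^n - (k-1)m' \ge m' \ge F(n,k),
\]
where the first inequality uses the hypothesis $F'(n,k-1)\le \tfrac{2^n}{k}+1$, equivalently $km'\le 2^n$.  By the definition of $F(n,k)$, the coloring $\phi'$ admits a strong rainbow copy of $A_k$; deleting its unique color-$k$ element produces a strong rainbow copy of $A_{k-1}$ whose $k-1$ sets were all originally colored by $\phi$, contradicting the choice of $\phi$.

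The only subtle point in the argument is ensuring that the newly created color class is itself large enough to satisfy the hypothesis of the $F(n,k)$ statement, and this is precisely what the assumption $F'(n,k-1)\le \tfrac{2^n}{k}+1$ is tailored to provide.  (Without such a bound on $m'$, the uncolored portion of $B_n$ could be too small after trimming, and the reduction from the partial $(k-1)$-coloring to a full $k$-coloring would not go through.)
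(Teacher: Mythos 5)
Your proof is correct and essentially follows the same approach as the paper: both handle the easy inequalities by noting a full coloring is a partial one, and both handle the last inequality by taking an extremal partial $(k-1)$-coloring with classes of size exactly $F'(n,k-1)-1$, extending it by a fresh $k$-th color on the uncolored sets, and checking the size bound $2^n-(k-1)(m-1)\ge m-1$. The paper phrases this as a direct lower-bound construction showing $F(n,k)>F'(n,k-1)-1$, while you phrase it by contradiction and explicitly delete the color-$k$ element of a hypothetical rainbow $A_k$ to produce a rainbow $A_{k-1}$; these are the same argument in slightly different clothing.
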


\begin{proof}
The inequalities $F(n,k-1)\le F'(n,k-1)$ and $G(n,k-1)\le G'(n,k-1)$ are immediate as any coloring is a special partial coloring. 
Let $m=F'(n,k-1)$ and let $\phi'$ be a partial $(k-1)$-coloring of $B_n$ such that $\phi'$ has no rainbow $A_{k-1}$ and every color class in $\phi'$ has size at least $m-1$.  We may assume without loss of generality that each color class of $\phi'$ has size exactly $m-1$.  Let $\phi$ be the $k$-coloring of $B_n$ obtained from $\phi'$ by assigning color $k$ to each uncolored set. As $\phi'$ does not admit a rainbow $A_{k-1}$, $\phi$ does not admit a rainbow $A_k$. Since there are $2^n - (k-1)(m-1)$ sets of color $k$ under $\phi$ and $2^n - (k-1)(m-1) \ge (m-1)$ when $m\le \frac{2^n}{k} + 1$, it follows that $F(n,k)>m-1$.
\end{proof}

%If $F'(n,k-1)\le \frac{2^n}{k}$, then let $\phi'$ be any partial coloring of $B_n$ that does not admit a rainbow $A_{k-1}$. Let $\phi$ be the $k$-coloring of $B_n$ obtained from $\phi'$ by adding color $k$ to all sets not colored by $\phi'$. If this last color class is at least the size of the smallest color class of $\phi'$, say color $k-1$, then we are done. If not, then we can recolor some of the sets from color classes 1 to $k-2$ to color $k$ such that all color classes have size at least the size of color $k-1$ to obtain a coloring $\phi^*$. (Here we use the assumption $F'(n,k-1)\le \frac{2^n}{k}$.) As $\phi'$ does not admit a rainbow $A_{k-1}$, $\phi^*$ does not admit a rainbow $A_k$. 

%The proof of $G'(n,k-1)\le G(n,k)$ is identical to that of $F'(n,k-1)\le F(n,k)$.

%\begin{lemma}\label{cubes}
%Let $m<n$ be a pair of positive integers; then $\lambda_n(B_m)=\frac{n+1}{n-m+1}$. 
%\end{lemma}

%\begin{proof}
%\[
%\lambda_n(B_m)=\sum_{i=0}^m\frac{\binom{m}{i}}{\binom{n}{i}}=\frac{m!}{n!}\sum_{i=0}^m \frac{(n-i)!}{(m-i)!}=\frac{m!(n-m)!}{n!}\sum_{i=0}^m\binom{n-i}{n-m}=
%\]
%\[
%=\frac{m!(n-m)!}{n!}\binom{n+1}{n-m+1}=\frac{m!(n-m)!}{n!}\frac{(n+1)!}{m!(n-m+1)!}=\frac{n+1}{n-m+1}
%\]
%\[
%\lambda_n(2^{[m]})=\sum_{i=0}^m\frac{\binom{m}{i}}{\binom{n}{i}}\le \sum_{i=0}^m\left(\frac{m}{n}\right)^i\le \frac{n}{n-m}.
%\]
%\end{proof}

\begin{defn}Let $\cF_1,\cF_2,\dots, \cF_l$ be pairwise disjoint families in $B_n$. A chain $\emptyset=S_0\subseteq S_1\subseteq S_2 \subseteq \dots \subseteq S_m= [n]$ is called a \textit{core chain} if $\bigcup_{i=1}^l \cF_i\subseteq \bigcup_{j=0}^{m-1} B_{S_{j}S_{j+1}}$ and none of the truncated subcubes $B^-_{S_{j}S_{j+1}}$ contain elements from two different families $\cF_{i_1}$ and $\cF_{i_2}$. \end{defn}

The next simple lemma is more or less due to Ahlswede and Zhang \cite{AZ}, we include the proof for completeness.

\begin{lemma}\label{mcs}
If $\cF_1,\cF_2,\dots, \cF_l\subseteq B_M$ are mutually comparable, then they have a core chain.
%there exists a chain $\emptyset=C_0\subseteq C_1\subseteq C_2 \subseteq \dots \subseteq C_j= [M]$ such that $\cF:=\cup_{i=1}^l\cF_i \subseteq \cup_{h=1}^j\{H:C_{j-1}\subseteq H\subseteq C_j\}$ holds and for any $1\le h \le j$ there exists an $i=i(h)$ such that  $\cF\cap \{H: C_{h-1}\subsetneq H\subsetneq C_h\}\subseteq \cF_i$.

%%if $F\in \cF\cap \{H: C_{h-1}\subsetneq H\subsetneq C_h\}$, then $F$ belongs to $\cF_i$.
\end{lemma}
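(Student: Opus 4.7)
The plan is to proceed by induction on $|\cF|:=\sum_{i=1}^l|\cF_i|$. The base case $|\cF|=0$ is handled by the trivial chain $\emptyset\subseteq[M]$: there are no sets to place and the single-family-per-interior condition is vacuous.

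For the inductive step, the key structural observation is that mutual comparability forces all maximal elements of $\cF:=\bigcup_i\cF_i$ to lie in a single family. Indeed, two maximal elements from distinct families would be comparable (by mutual comparability) and distinct (by the disjointness of the families), which would contradict their maximality. Relabel so that all maximal elements of $\cF$ belong to $\cF_l$ and pick any such maximal element $F^*\in\cF_l$. The same reasoning yields that every $F\in\cF_i$ with $i<l$ is strictly contained in $F^*$: $F$ must be comparable with the maximal $F^*$, and $F=F^*$ is ruled out by the disjointness of the families while $F\supsetneq F^*$ is ruled out by maximality of $F^*$.

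I would then apply the inductive hypothesis to $\cF\setminus\{F^*\}$ (which still consists of mutually comparable families) to get a core chain $\emptyset=T_0\subseteq T_1\subseteq\dots\subseteq T_m=[M]$, and splice $F^*$ into it. Concretely, let $a$ be the largest index with $T_a\subseteq F^*$ and $b$ the smallest with $F^*\subseteq T_b$; the plan is to replace the segment $T_a\subseteq T_{a+1}\subseteq\dots\subseteq T_b$ by
\[
T_a\subseteq T_{a+1}\cap F^*\subseteq\dots\subseteq T_{b-1}\cap F^*\subseteq F^*\subseteq T_{a+1}\cup F^*\subseteq\dots\subseteq T_{b-1}\cup F^*\subseteq T_b.
\]
The main obstacle is then to verify that this spliced chain is a core chain for $\cF$. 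Here the structural fact above is crucial: any $F\in\cF_i$ with $i<l$ lying in some old slab $B^-_{T_j,T_{j+1}}$ with $a\le j<b$ satisfies $F\subsetneq F^*$, so it falls into the meets-side slab $B^-_{T_j\cap F^*,T_{j+1}\cap F^*}$ of the new chain and inherits its single-family label from the old slab; elements of $\cF_l\setminus\{F^*\}$ comparable with $F^*$ are handled analogously, while those incomparable with $F^*$ (which by mutual comparability must lie in $\cF_l$) end up on the joins side. A careful case-by-case tracking of where each old-slab element lands, relying on the fact that only one of the new interiors can receive elements from outside $\cF_l$, finishes the proof.
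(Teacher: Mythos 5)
Your structural observation that all maximal elements of $\cF=\bigcup_i\cF_i$ must lie in a single family is correct, and it is close in spirit to the paper's starting point. However, the splicing step is fatally flawed, and the flaw is exactly at the place you gloss over: elements of $\cF_l\setminus\{F^*\}$ that are \emph{incomparable} to $F^*$ do \emph{not} end up anywhere in the spliced chain. Every subcube of your new chain has all of its elements comparable to $F^*$: the left part and the meets-side segment lie entirely inside $\cD_{F^*}$, while the joins-side segment and the right part lie entirely inside $\cU_{F^*}$. A set $H\in\cF_l$ with $H\not\subseteq F^*$ and $F^*\not\subseteq H$ is therefore in none of the subcubes of the spliced chain, violating the covering condition $\bigcup_i\cF_i\subseteq\bigcup_j B_{S_j,S_{j+1}}$. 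Concretely, take $M=4$, $\cF_1=\{\{1\}\}$, $\cF_2=\{\{1,2\},\{1,3\}\}$, and $F^*=\{1,2\}$. After deleting $F^*$, the induction may return the core chain $\emptyset\subseteq\{1\}\subseteq[4]$, for which $a=1$, $b=2$, and your splice produces $\emptyset\subseteq\{1\}\subseteq\{1,2\}\subseteq[4]$. The set $\{1,3\}\in\cF_2$ lies in no subcube of this chain, so it is not a core chain for $\cF_1,\cF_2$.

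The paper avoids this by \emph{not} peeling off a single maximal element. Instead it fixes the family $\cF_i$ containing a maximum-size set $F\in\cF\setminus\{[M]\}$, collects the subfamily $\cF_i'$ of sets in $\cF_i$ not contained in any $H\in\cF\setminus\cF_i$ with $H\ne[M]$, and sets $S:=\bigcap_{F'\in\cF_i'}F'$. It then shows that everything in $\cF\setminus\cF_i'$ lives inside $B_S$, recurses on the strictly smaller ground set $B_S$ (induction on $M$, not on $|\cF|$), and appends $[M]$ to obtain a chain whose top slab $B_{S,[M]}$ contains sets from $\cF_i$ only. In other words, the paper removes the entire ``top antichain'' of $\cF_i$ at once and separates it from the rest by a single set $S$ in the chain, rather than trying to wedge one maximal element between two existing chain links. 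To salvage your approach you would need to add a chain element that simultaneously sits below \emph{all} of the pairwise-incomparable maximal elements and above the rest of $\cF$, which is precisely the role played by $S$ in the paper's argument.
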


\begin{proof}
We proceed by induction on $M$, with the base case $M=1$ being trivial. Suppose the statement has been proved for any $M'<M$, let $\cF_1,\cF_2,\dots, \cF_l\subseteq B_M$ be mutually comparable families of sets, and let $\cF:=\bigcup_{i=1}^l \cF_i$. Let $F$ be a set of maximum size in $\cF \setminus \{[M]\}$  and suppose $F\in \cF_i$. Then consider the family $\cF'_i$ of those sets in $\cF_i$ that are not contained in any $H\in \cF\setminus \cF_i$ with $H\neq [M]$. By its maximum size, $F$ belongs to $\cF'_i$. Set $S:=\cap_{F'\in \cF'_i}F'$. The sets of $\cF'_i$ obviously contain $S$. The sets of $\cF\setminus \cF_i$ are contained in $S$ by the definition of $\cF_i'$ and the mutually comparable property. Finally, the sets of $\cF_i\setminus \cF'_i$ are contained in $S$ because they are contained in some  $H\in \cF\setminus \cF_i$. %Observe that all sets $H\in \cF\setminus \cF_i$ must be contained in $C:=\cap_{F'\in \cF'_i}F'$ by the mutual comparable property and obviously every $F'\in \cF'_i$ contains $C$.
We apply induction to $\cF_1,\dots, \cF_{i-1},\cF_i\setminus \cF'_i,\cF_{i+1},\dots, \cF_l\subseteq B_S$. Adding $[M]$ to the  resulting core chain will give us the desired core chain. 
\end{proof}

\begin{corollary}\label{2} We have:

\smallskip 
\textbf{(i)} $F'(n,2)=2^{\lfloor n/2\rfloor}+2$ if $n\ge 5$ is odd, and $F'(n,2)=2^{\lfloor n/2\rfloor}$ if $n$ is even.

\smallskip
\textbf{(ii)}
If $\cF_1,\cF_2\subseteq 2^{[n]}$ are mutually comparable families with a core chain that has neither a set of size $\lfloor n/2\rfloor$ nor a set of size $\lceil n/2\rceil$,
%such that for all sets $C_i$ in the chain assured by Lemma \ref{mcs} have size different from $\lfloor n/2\rfloor$ and $\lceil n/2\rceil$, 
then $$\min\{|\cF_1|,|\cF_2|\}\le 2^{\lfloor n/2\rfloor-1}+4.$$

\smallskip
\textbf{(iii)} $\lim_{n\rightarrow \infty} G'(n,2)=\sqrt{2}+1$. 
\end{corollary}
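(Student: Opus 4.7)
All three parts rest on Lemma~\ref{mcs}: since $F'(n,2)$ and $G'(n,2)$ are dual respectively to the largest values of $\min(|\cF_1|,|\cF_2|)$ and $\min(\lambda_n(\cF_1),\lambda_n(\cF_2))$ over mutually comparable disjoint pairs $\cF_1,\cF_2\subseteq B_n$, any extremal pair admits a core chain $\emptyset=S_0\subsetneq S_1\subsetneq\cdots\subsetneq S_r=[n]$ whose truncated subcubes $B^-_{S_{j-1},S_j}$ each go to at most one family, and Lemma~\ref{cubes2} computes the Lubell mass of every full interval $B_{S_{j-1},S_j}$.

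The lower bounds use the $r=2$ chain $\emptyset\subset T\subset[n]$. For (i) take $|T|=\lfloor n/2\rfloor$, $\cF_1=\cD_T$, $\cF_2=\cU_T\setminus\{T\}$; when $n$ is odd, move $[n]$ from $\cF_2$ to $\cF_1$ to lift the smaller cardinality to $2^{\lfloor n/2\rfloor}+1$. For (iii) take $|T|=\lfloor n/\sqrt 2\rfloor$, $\cF_1=\{\emptyset\}\cup\cU_T$ and $\cF_2=\cD_T\setminus\{\emptyset,T\}$; Lemma~\ref{cubes2} yields $\lambda_n(\cF_1)=1+\tfrac{n+1}{|T|+1}$ and $\lambda_n(\cF_2)=\tfrac{n+1}{n-|T|+1}-1-\binom{n}{|T|}^{-1}$, so writing $\beta=|T|/n$ the asymptotic balance $1+\tfrac{1}{\beta}=\tfrac{1}{1-\beta}-1$ reduces to $2\beta^2=1$, and at $\beta=1/\sqrt 2$ both Lubell masses tend to the common limit $\sqrt 2+1$.

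For the upper bounds I case-split on whether the core chain contains a set $T$ with $|T|\in\{\lfloor n/2\rfloor,\lceil n/2\rceil\}$. When it does, $T$ splits the chain into sub-chains in $\cD_T$ and in $\cU_T$; applying the core-chain logic separately within each half and tracking the assignment of the shared endpoint $T$ produces the claimed cardinality bound in (i) and, via the same balance computation as in the lower-bound construction, the asymptotic Lubell bound $\sqrt 2+1+o(1)$ in (iii). When the chain avoids middle sizes, part (ii) finishes (i) and a parallel Lubell argument finishes (iii). For (ii), let $p$ be the unique index with $|S_p|\le\lfloor n/2\rfloor-1$ and $|S_{p+1}|\ge\lceil n/2\rceil+1$; assuming WLOG that the straddling interior $B^-_{S_p,S_{p+1}}$ belongs to $\cF_1$ forces $\cF_2\subseteq \cD_{S_p}\cup\cU_{S_{p+1}}$ with each half of size at most $2^{\lfloor n/2\rfloor-1}$, and the sharper bound $2^{\lfloor n/2\rfloor-1}+4$ follows by observing that mutual comparability within each half prevents $\cF_2$ from simultaneously filling both, the ``$+4$'' absorbing at most four endpoint contributions from $\emptyset,S_p,S_{p+1},[n]$.

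The main obstacle I expect is the upper bound in (iii) for chains of length $r\ge 3$: the naive sum $\lambda_n(\cF_1)+\lambda_n(\cF_2)\le\sum_j\lambda_n(B_{S_{j-1},S_j})-\sum_{i=1}^{r-1}\binom{n}{|S_i|}^{-1}$ can exceed $2(\sqrt 2+1)$. My plan is to handle this by merging any two consecutive intervals assigned to the same family (which reduces the effective chain length toward $r\le 2$), and, for the remaining strictly alternating assignments with $r\ge 3$, to observe that at least one interval has dimension $\Omega(n)$ with both endpoints at interior levels, so standard entropy estimates on central binomial coefficients force its Lubell mass to $o(1)$, which in turn keeps the corresponding family's total Lubell mass below $\sqrt 2+1$.
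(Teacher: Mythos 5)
Your lower bound constructions for (i) and (iii) match the paper's (in (iii) you correctly remove $T$ from $\cF_2$, where the paper's phrasing inadvertently double-colors $H$), but the upper bound arguments contain concrete errors. In (ii), you claim that once $B^-_{S_p,S_{p+1}}$ is assigned to $\cF_1$, ``mutual comparability within each half prevents $\cF_2$ from simultaneously filling both'' of $\cD_{S_p}$ and $\cU_{S_{p+1}}$. This is false: take $|S_p|=n-|S_{p+1}|=\lfloor n/2\rfloor-1$, $\cF_1=B^-_{S_p,S_{p+1}}$ and $\cF_2=\cD_{S_p}\cup\cU_{S_{p+1}}$; everything in $\cD_{S_p}\cup\cU_{S_{p+1}}$ is comparable to everything in $B_{S_p,S_{p+1}}$, the chain $\emptyset\subset S_p\subset S_{p+1}\subset[n]$ is a core chain, and $|\cF_2|=2\cdot 2^{\lfloor n/2\rfloor-1}=2^{\lfloor n/2\rfloor}$, far exceeding $2^{\lfloor n/2\rfloor-1}+4$. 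In this configuration the minimum is attained by $|\cF_1|$ (which is tiny), so a proof of (ii) must control $\min\{|\cF_1|,|\cF_2|\}$ by trading off the dimensions $d_j$ assigned to the two families, not by bounding $|\cF_2|$ directly. The paper does this via a case analysis on whether some $d_j>\lceil n/2\rceil$, whether exactly one $d_j$ lies in $[\lfloor n/2\rfloor,\lceil n/2\rceil]$, or all $d_j\le\lfloor n/2\rfloor-1$, together with the merging inequality $2^x+2^y\le 2^{x+y}-2$; the same machinery also drives the upper bound in (i), whereas your sketch of ``applying the core-chain logic separately within each half'' leaves the arithmetic unaddressed.

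For the upper bound in (iii), your plan to dispose of strictly alternating chains with $r\ge 3$ by finding an interval of dimension $\Omega(n)$ with ``both endpoints at interior levels'' and hence $o(1)$ Lubell mass does not go through: the interval $B_{\{1\},[n]\setminus\{n\}}$ has both endpoints at interior levels $1$ and $n-1$, dimension $n-2$, yet by Lemma~\ref{cubes2} its Lubell mass is $(n+1)/6=\Theta(n)$, not $o(1)$. What is actually needed is the structural reduction the paper proves as its Claim~\ref{g2}: if both color classes have Lubell mass $\ge 2.1$, then the chain must begin with a single large jump past $n/2$, i.e.\ $|S_1|>n/2$ or $|S_l|<n/2$ (this follows because the interval of the chain that straddles level $n/2$ must itself have Lubell mass bounded away from zero, forcing its endpoints to be close to $\emptyset$ and $[n]$ by Lemma~\ref{cubes2}, and then $\cD_{S_j}\cup\cU_{S_{j+1}}$, which contains one whole color class, has Lubell mass only $2+o(1)$). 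With that in hand, the optimization $\min\{1+\tfrac{1}{c},\tfrac{1}{1-c}-1\}$ over $c=|S_1|/n$ gives $\sqrt{2}+1$, but your proposal does not supply a substitute for this reduction.
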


\begin{proof}
First we prove \textbf{(i)}. For the lower bound consider a set $S$ of size $\lfloor n/2\rfloor$ and for any $T\subseteq S$ let us define $\phi(T)=1$. If $n$ is even, then let $\phi(T')=2$ for any $T'\in \cU_S\setminus \{S\}$. Then $\phi$ does not admit a rainbow $A_2$, $|\phi^{-1}(1)|=2^{n/2}$ and $|\phi^{-1}(2)|=2^{n/2}-1$ so $F'(n,2)>2^{n/2}-1$. If $n$ is odd, then let $\phi([n])=1$ and $\phi(T)=2$ for any set $T\in B^-_{S,[n]}$. Again, $\phi$ does not admit a rainbow $A_2$, $|\phi^{-1}(1)|=2^{\lfloor n/2\rfloor}+1$ and $|\phi^{-1}(2)|=2^{\lceil n/2\rceil}-2$, so if $n\ge 5$, then $F'(n,2)>2^{\lfloor n/2\rfloor}+1$ for odd values of $n$.

To obtain the upper bound observe that a partial coloring $\phi:B_n\rightarrow \{1,2\}$ does not admit a rainbow $A_2$ if and only if the color classes $\phi^{-1}(1)$ and $\phi^{-1}(2)$ are mutually comparable. Therefore applying Lemma \ref{mcs} to $\phi^{-1}(1)$ and $\phi^{-1}(2)$ we obtain a core chain $\emptyset=S_0\subset S_1 \subset \dots \subset S_l\subset S_{l+1}=[n]$ with $\cup_{j=0}^{l}B_{S_j,S_{j+1}}$ containing both color classes such that for every $j$ the truncated subcube $B_{S_j,S_{j+1}}^-$ contains sets only from one color class. Let $d_j=|S_{j+1} \setminus S_j|$ and thus $\sum_{j=0}^ld_j=n$. Then the size of color class 1 is at most $2^{d_{j_1}}+2^{d_{j_2}}+\ldots + 2^{d_{j_i}}+k-1$ where the $j_h$'s are the indices of  the subcubes containing only sets of color 1 and $k$ is the number of $S_j$'s of color 1 with both $B^-_{S_{j-1},S_j}$ and $B^-_{S_j,S_{j+1}}$ containing only sets of color 2. As for positive integers $x,y$ we have $2^x+2^y\le 2^{x+y}-2$ (unless $x=y=1$), to maximize the minimum size of the color classes, we must have  only two subcubes in the partition. A simple case analysis based on the size of $S_1$ and the colors of $\emptyset,S_1$ and $[n]$ finishes the proof of \textbf{(i)}.

\smallskip 

To prove \textbf{(ii)}, let $\emptyset=S_1,S_2,\dots,S_j=[n]$ be the core chain 
%ensured by Lemma \ref{mcs} 
and let us write $d_h=|S_h\setminus S_{h-1}|$ for any $1\le h\le j$. Let us give all the details of this case analysis. If there exists a $d_j>\lceil n/2\rceil$, then the sum of the $d_i$'s corresponding to the other color class is at most $\lfloor n/2\rfloor -1$, so the color class has size at most $2^{\lfloor n/2\rfloor-1}+2$. As, by the assumption of \textbf{(ii)}, no $S_i$ has size $\lfloor n/2\rfloor$ or $\lceil n/2\rceil$, there can be at most one $d_j$ with $\lfloor n/2\rfloor \le d_j\le \lceil n/2\rceil$. If there is such a $d_j$, then  the $d_i$'s belonging to the other color sum up to at most $\lfloor n/2\rfloor+2$ and all of them are at most $\lfloor n/2\rfloor -1$. This yields that the size of this color class is at most $2^{\lfloor n/2\rfloor-1}+4$. (Note that this is sharp if $n$ is odd, $d_1=\lfloor n/2\rfloor -1$, $d_2=\lfloor n/2\rfloor$, $d_3=2$.) Finally, suppose that all $d_j$'s are at most $\lfloor n/2\rfloor -1$. Note that there are at most 2 $d_j$'s larger than $n/3$. If all $d_i$'s belonging to one color class are at most $\lfloor n/2\rfloor -2$, then one of the color classes has size at most $2^{\lfloor n/2\rfloor-1}$. If both color classes have a $d_i$ that equals $\lfloor n/2\rfloor -1$, then the remaining $d_i$'s sum to 2 or 3 (depending on the parity of $n$), so one of the color classes have size at most $2^{\lfloor n/2\rfloor -1}+3$.

\smallskip

To prove \textbf{(iii)}, first we show that $\liminf G'(n,2)\ge \sqrt{2}+1$. Let us fix a  set $H$ of size $\lfloor \frac{n}{\sqrt{2}}\rfloor$. Let us color $\cU_H \cup \{\emptyset\}$ by 1 and let us color  $\cD_{H}\setminus \{\emptyset\}$ by 2. Applying Lemma \ref{cubes2}, we obtain that the Lubell mass of  both color classes is $\sqrt{2}+1+o(1)$. 

Now we prove $\limsup G'(n,2)\le \sqrt{2}+1$ through a sequence of claims. Throughout the proof we will assume that $n$ is sufficiently large. Let $\phi:B_n\rightarrow \{1,2\}$ be a partial coloring that does not admit a rainbow copy of $A_2$ and let us write $\cH_1=\phi^{-1}(1)$ and $\cH_2=\phi^{-1}(2)$. Finally, let $\emptyset=S_0,S_1,\dots, S_l,S_{l+1}=[n]$ be a core chain for this pair of mutually comparable families. 
%ensured by Lemma \ref{mcs}.

\begin{claim}\label{g1}
 If $|S_h|\neq 0, |S_{h+1}|\neq n$ and $\lambda (B_{S_h,S_{h+1}})\ge n^{-1/3}$, then $\max\{|S_h|,n-|S_{h+1}|\}\le n^{2/3}$.
\end{claim}

\begin{proof}[Proof of Claim]
By Lemma \ref{cubes2}, we obtain $$\lambda_n(B_{1,n-\floor{n^{2/3}}})=\frac{n+1}{2+\floor{n^{2/3}}}\frac{1}{1+\floor{n^{2/3}}} < 
\frac{n+1}{(1+n^{2/3})n^{2/3}} < n^{-1/3}.$$
\end{proof}

\begin{claim}\label{g2}
If $\min \{\lambda_n(\cH_1),\lambda_n(\cH_2)\}\ge 2.1$, then $l\ge 1$ and either $|S_1|>n/2$ or $|S_l|<n/2$.
\end{claim}

\begin{proof}
If $l=0$, then one of the color classes is a subfamily of $\{\emptyset, [n]\}$ and thus its Lubell mass is at most 2. 

Now we proceed by contradiction. Suppose $|S_1|\le n/2$ and $|S_l|\ge n/2$; then there exists an index $1\le j \le l$ such that $|S_j|\le n/2, |S_{j+1}|>n/2$. Observe that $\lambda_n( \cD_{S_j})\le 2$ and $\lambda_n(\cU_{S_{j+1}})\le 2$ by Lemma \ref{cubes2}. Observe furthermore that $\cH_1\cup \cH_2 \subseteq \cD_{S_j} \cup B_{S_j,S_{j+1}}\cup \cU_{S_{j+1}}$ holds and one of the color classes is contained in $\cD_{S_j} \cup \cU_{S_{j+1}}$. 

\smallskip

If $\lambda_n(B_{S_j,S_{j+1}})\le n^{-1/3}$, then $\lambda_n(\cH_1\cup \cH_2)\le 4+n^{-1/3}$ holds and thus $\min \{\lambda_n(\cH_1),\lambda_n(\cH_2)\}< 2.1$.

\smallskip 

If $\lambda_n(B_{S_j,S_{j+1}})> n^{-1/3}$, then by Claim \ref{g1}, we have $\max\{|S_j|,n-|S_{j+1}|\}\ge n^{2/3}$ and thus $\lambda_n(\cD_{S_j} \cup \cU_{S_{j+1}})\le 2+o(1)$. Since $\cD_{S_j} \cup \cU_{S_{j+1}}$ contains $\cH_1$ or $\cH_2$, $\min \{\lambda_n(\cH_1),\lambda_n(\cH_2)\}< 2.1$. 
\end{proof}

By Claim \ref{g2}, we can assume without loss of generality that $l\ge 1$, $|S_1|>n/2$ and $B^-_{\emptyset,S_1}\cap \cH_2=\emptyset$. Then $\lambda_n(\cH_2)\le 1+ \lambda_n(B_{S_1,[n]})$ and in order to have $\lambda_n(\cH_2)\ge 2.1$, we must have $\emptyset,[n]\in \cH_2$. 

If $\cH_1$ contains a set $S$ of size at least $0.99n$, then for some $S'\subseteq S\subseteq S''$, we have $\cH_2\subseteq \{\emptyset\} \cup B_{S_1,S'}\cup B_{S'',[n]}$ and thus, by Lemma \ref{cubes2}, $\lambda_n(\cH_2)\le 1+\lambda_n(B_{|S_1|,n-1})+\lambda_n(B_{0.99n,n})\le 1+O(\frac{1}{n})+\frac{1}{0.99}<1+\sqrt{2}$.

Therefore, we can assume that $\cH_1\subseteq B_{\emptyset,S_1}\setminus \{\emptyset\}\cup B_{S_1,[n]}\cap \binom{[n]}{\le 0.99n}$.
But as $|S_1|\ge n/2$, we have $$\lambda_n(B_{S_1,[n]}\cap \binom{[n]}{\le 0.99n})=\sum_{i=0.01n}^{n-|S_1|}\frac{\binom{n-|S_1|}{i}}{\binom{n}{i}}\le \frac{n}{2}\frac{\binom{n/2}{0.01n}}{\binom{n}{0.01n}}=o(1).$$  
 Therefore $\lambda_n(\cH_1)\le \lambda_n(B_{0,|S_1|})-1+o(1)$. It follows that 
$$\limsup_n G'(n,2)\le \limsup_n\max_{n/2<|S_1|\le n}\min\{1+ \lambda_n(B_{S_1,[n]}), \lambda_n(B_{0,|S_1|})-1\}.$$
Writing $|S_1|=cn$ for some $1/2\le c\le 1$ and applying Lemma \ref{cubes2}, we obtain that  the right hand side is $\max\min\{1+\frac{1}{c},\frac{1}{1-c}-1\}$. The first function is decreasing in $c$, the second one is increasing, so the minimum is maximized when the two functions are equal. This happens at $c=\frac{1}{\sqrt{2}}$ and thus $\limsup G'(n,2)\le \sqrt{2}+1$.
\end{proof}

\begin{theorem}\label{23}
\textbf{(i)} For $n\ge 18$ we have $F(n,3)=F'(n,2)$.

\smallskip 

\textbf{(ii)} We have $\lim_{n\rightarrow \infty}G(n,3)=\lim_{n\rightarrow \infty}G'(n,2)=1+\sqrt{2}$.
\end{theorem}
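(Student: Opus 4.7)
The plan is to prove both parts by combining a construction-based lower bound with a case-split upper bound keyed on whether two of the three color classes are mutually comparable.

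For the lower bounds, part (i) follows from Proposition~\ref{fourf}: Corollary~\ref{2}(i) yields $F'(n,2)\le 2^{\lfloor n/2\rfloor}+2$, which is at most $2^n/3+1$ for $n\ge 18$, hence $F(n,3)\ge F'(n,2)$. For part (ii), I would extend the extremal construction from the proof of Corollary~\ref{2}(iii) to a full 3-coloring: fix $H\subseteq [n]$ with $|H|=\lfloor n/\sqrt{2}\rfloor$ and let $\cH_1=\cU_H\cup\{\emptyset\}$, $\cH_2=\cD_H\setminus\cH_1$, and $\cH_3=B_n\setminus(\cH_1\cup\cH_2)$. Any element of $\cH_1$ is comparable to any element of $\cH_2$, so no antichain meets both, and the coloring admits no rainbow $A_3$. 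By Lemma~\ref{cubes2}, $\lambda_n(\cH_1),\lambda_n(\cH_2)\to 1+\sqrt{2}$ while $\lambda_n(\cH_3)\to\infty$; hence $\liminf_n G(n,3)\ge 1+\sqrt{2}$.

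For the upper bounds, let $\phi\colon B_n\to\{1,2,3\}$ be a full 3-coloring with no rainbow $A_3$ satisfying the relevant threshold hypothesis on every class. I would examine the three pairs $(\cH_i,\cH_j)$. If some pair is mutually comparable, the restriction of $\phi$ to $\cH_i\cup\cH_j$ is a rainbow-$A_2$-free partial 2-coloring, so the definition of $F'(n,2)$ (for part (i)) or Corollary~\ref{2}(iii) (for part (ii)) forces the smaller class down to size at most $F'(n,2)-1$ (resp.\ Lubell mass at most $G'(n,2)=1+\sqrt{2}+o(1)$), contradicting the hypothesis. Otherwise every pair has an incomparable representative $(A_{ij},B_{ij})\in\cH_i\times\cH_j$, and the no-rainbow-$A_3$ condition implies $\cH_k\subseteq\cI_{A_{ij}}\cup\cI_{B_{ij}}$ for the third index $k$. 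For part (i), using $|\cI_X|=2^{|X|}+2^{n-|X|}-1$, summing the three bounds $|\cH_k|\le|\cI_{A_{ij}}|+|\cI_{B_{ij}}|$ and comparing with $|\cH_1|+|\cH_2|+|\cH_3|=2^n$ (after optimizing the chosen pairs to have sizes near $n/2$) forces $2^n=O(2^{n/2})$, impossible for $n\ge 18$. For part (ii), the identity $\cI_A\cap\cI_B=\cD_{A\cap B}\cup\cU_{A\cup B}$ (valid since $A,B$ are incomparable) together with Lemma~\ref{cubes2} gives an explicit formula for $\lambda_n(\cI_A\cup\cI_B)$ in $|A|,|B|,|A\cap B|$; choosing an incomparable pair with all three of these sizes near $n/2$ drives $\lambda_n(\cI_A\cup\cI_B)$ toward $2<1+\sqrt{2}$, bounding $\lambda_n(\cH_k)$ below the threshold and yielding the contradiction.

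The main obstacle, in both parts, is the ``no mutually comparable pair'' case: the estimates above require incomparable pairs $(A_{ij},B_{ij})$ with sizes (and, for (ii), also intersection size) concentrated near $n/2$. Establishing their existence demands translating the size or Lubell-mass lower bound on each $\cH_i$ into sufficient spread in the middle levels of $B_n$. I expect this step will require an averaging argument over middle levels, possibly combined with an iterated use of the core-chain construction from Lemma~\ref{mcs} and the refined comparable-family estimate in Corollary~\ref{2}(ii) to handle configurations that are ``nearly'' mutually comparable.
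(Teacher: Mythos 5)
Your lower bounds are correct: Proposition~\ref{fourf} combined with Corollary~\ref{2}(i) gives $F'(n,2)\le F(n,3)$ for $n\ge18$, and the 3-coloring $\cH_1=\cU_H\cup\{\emptyset\}$, $\cH_2=\cD_H\setminus\cH_1$, $\cH_3=$~rest (with $|H|=\lfloor n/\sqrt2\rfloor$) has no rainbow $A_3$ and both $\lambda_n(\cH_1),\lambda_n(\cH_2)\to 1+\sqrt2$. The ``some pair is mutually comparable'' branch of the upper bound also works.

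The ``every pair has an incomparable representative'' branch has two genuine problems. First, the exceptional configuration you wave away is exactly where the difficulty lives: in the paper's argument the only surviving incomparable pairs between two classes are complement pairs $\{x\},[n]\setminus\{x\}$, and for such a pair $\cI_{\{x\}}\cup\cI_{[n]\setminus\{x\}}=B_n$, so the inclusion $\cH_k\subseteq\cI_A\cup\cI_B$ gives no information whatsoever. No averaging over middle levels produces a middle-sized incomparable pair here --- there isn't one. This case requires a separate, quite detailed analysis (the paper's core-chain case split on whether some $|S_i|\in\{\lfloor n/2\rfloor,\lceil n/2\rceil\}$). Second, your numerical target for part~(ii) is off: even for ``ideal'' incomparable $A,B$ with $|A|=|B|=n/2$ and $|A\cap B|$ close to $n/2$, Lemma~\ref{cubes2} gives $\lambda_n(\cD_A)\approx\lambda_n(\cU_A)\approx 2$, so $\lambda_n(\cI_A\cup\cI_B)\approx 4$, not $2$; the bound $\lambda_n(\cH_k)\le\lambda_n(\cI_A\cup\cI_B)$ therefore cannot by itself force $\lambda_n(\cH_k)\le 1+\sqrt2$.

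What your plan is missing is the normalization the paper makes at the very start: assume WLOG that $\cH_1$ contains at least half the $\lfloor n/2\rfloor$-level. Then an incomparable pair $H_2\in\cH_2$, $H_3\in\cH_3$ with $3\le|H_2|,|H_3|\le n-3$ leaves more than half the middle level incomparable to both, so a middle-level $\cH_1$-set completes a rainbow $A_3$. That is a much stronger conclusion than $\cH_1\subseteq\cI_{H_2}\cup\cI_{H_3}$, and it is what reduces the problem to the near-mutually-comparable pair $(\cH_2,\cH_3)$, where the Lubell mass of each class is controlled by subcubes hanging off a core chain rather than by the whole of $\cI_A\cup\cI_B$. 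Without this asymmetry, neither the counting estimate in~(i) nor the $1+\sqrt2$ threshold in~(ii) comes out.
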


\begin{proof}
Let $\phi:B_n\rightarrow [3]$ be a coloring that does not admit any strong rainbow $A_3$. Let $\cH_1,\cH_2,\cH_3$ denote the three color classes. As every level admits at most two colors, we can assume without loss of generality that $\cH_1$ contains at least $\frac{\binom{n}{\lfloor n/2\rfloor}}{2}$ sets of size $\lfloor n/2\rfloor$.

\begin{claim}\label{23claim}
Suppose either $|\cH_2|,|\cH_3|> n(n+1)+2=\binom{n}{0}+\binom{n}{1}+\binom{n}{2}+\binom{n}{n-2}+\binom{n}{n-1}+\binom{n}{n}$, or $\lambda_n(\cH_2),\lambda_n(\cH_3)\ge 2.1$ and $n$ is large enough. Then $\cH_2$ and $\cH_3$ are mutually comparable possibly with the exception of some complement set pairs of size $1$ and $n-1$.
\end{claim}

\begin{proof}[Proof of Claim]
First suppose there exists an incomparable pair $H_2\in \cH_2, H_3\in \cH_3$ with $3\le |H_2|,|H_3|\le n-3$. Then the number of sets of size $\lfloor n/2\rfloor$ that are comparable either to $H_2$ or to $H_3$ is at most $2\binom{n-3}{\lfloor n/2\rfloor}<\frac{\binom{n}{\lfloor n/2\rfloor}}{2}$. Therefore, there exists a set $H_1\in \cH_1$ of size $\lfloor n/2\rfloor$ such that $H_1,H_2,H_3$ form an antichain. This contradicts that $\phi$ does not admit a strong rainbow copy of $A_3$.

By the above, we can take $\cH_2'\subseteq \cH_2, \cH'_3\subseteq \cH_3$  maximal mutually comparable subfamilies such that they contain all sets from $\cH_2$ and $\cH_3$ of size between $3$ and $n-3$. We claim that, by any of the two possible assumptions on $|\cH_2|$ and $|\cH_3|$, there exist $H_2\in \cH_2, H_3\in \cH_3$ with $3\le |H_2|,|H_3|\le n-3$ and thus by definition of $\cH_2'$ and $\cH_3'$, we have that $H_2\in \cH_2', H_3\in \cH_3'$. This is clear for the case $|\cH_2|,|\cH_3|> n(n+1)+2=\binom{n}{0}+\binom{n}{1}+\binom{n}{2}+\binom{n}{n-2}+\binom{n}{n-1}+\binom{n}{n}$. 
Suppose next $\lambda_n(\cH_2),\lambda_n(\cH_3)\ge 2.1$ holds and assume that $\cH_3\subseteq \{\emptyset,[n]\} \cup \binom{[n]}{1} \cup \binom{[n]}{2} \cup \binom{[n]}{n-2} \cup \binom{[n]}{n-1}$. We will first show that $\cH_2\cap \binom{[n]}{\lfloor n/2\rfloor}=o\left(\binom{n}{\lfloor n/2\rfloor}\right)$.

As $\lambda(\cH_3)\ge 2.1$, there exists $\varepsilon>0$ and $i\in \{1,2,n-2,n-1\}$ such that $|\cH_3\cap \binom{[n]}{i}|\ge \varepsilon\binom{n}{i}$. If $i=1$, then all sets $H\in \cH_2\cap \binom{[n]}{\lfloor n/2\rfloor}$ must contain all but at most one singleton of $\cH_3\cap \binom{[n]}{1}$. Indeed if $x,y\notin H$ with $\{x\},\{y\}\in \cH_3$, then the number of those $\lfloor n/2\rfloor$-sets that contain at most one of $x,y$ is $(\frac{3}{4}+o(1))\binom{n}{\lfloor n/2\rfloor}$, so one of these, say $H'$, is colored 1 and then $H,H'$ and one of $\{x\}$ and $\{y\}$ would form an induced rainbow copy of $A_3$. The number of $\lfloor n/2\rfloor$-sets that contain all but one of $\varepsilon n$ singletons is not more than $\varepsilon n\binom{n-\varepsilon n+1}{\lfloor n/2\rfloor -\varepsilon n+1}=o\left(\binom{n}{\lfloor n/2\rfloor}\right)$. 

If $i=2$, then we claim that every set $H\in \cH_2\cap \binom{[n]}{\lfloor n/2\rfloor}$ must contain every pair $P\in \cH_3\cap \binom{[n]}{2}$. Indeed, as $|\cH_1\cap \binom{[n]}{\lfloor n/2\rfloor}|\ge \frac{1}{2}\binom{n}{\lfloor n/2\rfloor}$, there is an $H'\in \cH_1\cap \binom{[n]}{\lfloor n/2\rfloor}$ not containing $P$ so $P,H,H'$ would form an induced  rainbow copy of $A_3$. Observe that $|\cH_3\cap \binom{[n]}{2}|\ge \varepsilon \binom{n}{2}$ implies $|\cup_{P\in \cH_3\cap \binom{[n]}{2}}P|\ge \varepsilon' n$. Then similarly to the case $i=1$, we have  $\cH_2\cap \binom{[n]}{\lfloor n/2\rfloor}=o\left(\binom{n}{\lfloor n/2\rfloor}\right)$, since the number of $\lfloor n/2\rfloor$-sets containing a fixed set of size $\varepsilon'n$ is $o(\binom{n}{\lfloor n/2\rfloor}$. The cases $i=n-2,n-1$ are analogous to the cases $i=2,1$, so we can assume that $(1-o(1))\binom{n}{\lfloor n/2\rfloor}$ sets in $\binom{[n]}{\lfloor n/2\rfloor}$ belong to $\cH_1$. 

Let $H'\in \cH_2$ and $H''\in \cH_3$ be incomparable sets that are not complements of size 1 and $n-1$ (if there are no such sets, then we are done with the proof of the claim). Then the number of $\lfloor n/2\rfloor$-sets that are comparable to at least one of $H'$ or $H''$ is at most $(3/4+o(1))\binom{n}{\lfloor n/2\rfloor}$. Therefore, there exists $H\in \cH_1$ that forms an induced rainbow $A_3$ with $H'$ and $H''$. This contradiction shows that there must exist $H_2\in \cH_2$ and $H_3\in \cH_3$ with $3\le H_2,H_3\le n-3$.

The existence of $H_2,H_3$ shows that Lemma \ref{mcs} applied to $\cH_2'$ and $\cH_3'$ must yield a core chain $S_1,S_2,\dots, S_j$ that contains a set $S_i$ with $3\le |S_i|\le n-3$. As all sets of size $\lfloor n/2\rfloor$ that are colored 2 or 3 must be comparable to $S_i$, we obtain that the number of sets of size $\lfloor n/2\rfloor$ colored 1 is at least $\binom{n}{\lfloor n/2\rfloor}-\binom{n-3}{\lfloor n/2\rfloor}$. 

Suppose finally that there exist incomparable sets $H \in \cH_2, H'\in \cH_3$ such that $H,H'$ are not complement pairs of size 1 and $n-1$. We claim that there must exist an $\lfloor n/2\rfloor$-set $H''$ colored 1 that is incomparable to both $H$ and $H'$ contradicting the fact that $\phi$ does not admit a strong rainbow $A_3$. Indeed, the two worst cases are the following.

$\bullet$ If $|H|=|H'|=1$ or $|H|=|H'|=n-1$, then the number of $\lfloor n/2 \rfloor$-sets that are  comparable  to at least one of $H$ and $H'$ is $\binom{n}{\lfloor n/2\rfloor}-\binom{n-2}{\lfloor n/2\rfloor}<\binom{n}{\lfloor n/2\rfloor}-\binom{n-3}{\lfloor n/2\rfloor}$, so indeed, there exists at least one $H''$ incomparable to both $H$ and $H'$.

$\bullet$
If $|H|=1,|H'|=n-2$, then the number of $\lfloor n/2 \rfloor$-sets that are  comparable to $H$ or $H'$ is $\binom{n-2}{\lfloor n/2\rfloor}+\binom{n-1}{\lfloor n/2\rfloor-1}<\binom{n}{\lfloor n/2\rfloor}-\binom{n-3}{\lfloor n/2\rfloor}$, so again, there exists at least one $H''$ incomparable to both $H$ and $H'$.
\end{proof}

With Claim \ref{23claim} in hand, we are ready to prove \textbf{(i)}. To prove $F(n,3)\le F'(n,2)$ we need to show that one of the color classes has size smaller than $F'(n,2)$. If $n\ge 18$, then $n(n+1)+2< F'(n,2)$, so if a color class of $\phi$ has size at most $n(n+1)+2$, then we are done. If two color classes of $\phi$ are mutually comparable, then by definition, one of these has size less than $F'(n,2)$ and we are done again. By Claim \ref{23claim}, the only other possibility is that $\cH_2,\cH_3$ contains a pair of complement sets $\{x\}\in \cH_2,[n]\setminus \{x\}\in \cH_3$, and there is a maximal mutually comparable pair of subfamilies $\cH'_2,\cH'_3$ such that $\cH_2'$ and $\cH_3'$ contain all sets from $\cH_2$ and $\cH_3$ of size between 2 and $n-2$. Consider the core chain $S_0,S_1,\dots, S_j$ 
%that one obtains applying Lemma \ref{mcs} to 
of $\cH'_2$ and $\cH'_3$. 

Let us assume first that no  $S_i$ has size $\lfloor n/2\rfloor$ or $\lceil n/2\rceil$. Then by \textbf{(ii)} of Corollary \ref{2}, either $\cH'_2$ or $\cH'_3$ has size at most $2^{\lfloor n/2\rfloor-1}+4$ and thus one of $\cH_2,\cH_3$ has size at most $2^{\lfloor n/2\rfloor-1}+2n+4< 2^{\lfloor n/2\rfloor}\le F'(n,2)$, where the first inequality holds by the assumption $n\ge 18$. 

Finally, suppose $|S_i|=\lfloor n/2\rfloor$. We know that either $x$ or $[n]\setminus \{x\}$ is not comparable to $S_i$. 

\bigskip

\textsc{Case I}
$x\notin S_i$. 

As all sets of $\cH_3$ but $[n]\setminus \{x\}$ are comparable to $\{x\}$, and all sets of $\cH_3$ of size between 1 and $n-2$ are comparable to $S_i$, we must have $\cH_3\subseteq \cU_{S_i\cup \{x\}} \cup \binom{[n]}{n-1}\cup\{\emptyset\}$. If $n$ is even, then this means $|\cH_3|\le 2^{n/2-1}+n/2+2<2^{n/2}=F'(n,2)$  ($n\ge 18$ was used for the second inequality) and we are done. 

So we can suppose $n=2m+1$. If $\cH_2$ contains a set $F\neq S_i\cup \{x\}$ of size between $m+1$ and $n-1$, then $\cH_3\subseteq B_{S_1\cup\{x\},F\cup \{x\}}\cup  \cU_{F\cup \{x\}} \cup \binom{[n]}{n-1}\cup \{\emptyset\}$ so $|\cH_3|\le 2^{m-1}+n<2^m+2=F'(n,2)$ and we are done again. So we may assume $\cH_2 \subseteq \cD_{S_i}\cup \{S_i\cup\{x\},[n]\}\cup\binom{[n]}{1}$. First we claim that $\cH_2$ cannot contain $\{y\}$ with $x\neq y\notin S_i$.  Indeed, by Claim \ref{23claim}, it would yield that $\cH_3\subseteq \cU_{S_i\cup\{x,y\}}\cup \binom{[n]}{n-1}\cup \{\emptyset\}$ and thus $\cH_3$ is of size at most $2^{m-1}+n+1< F'(n,2)$. This shows $\cH_2 \subseteq \cD_{S_i}\cup \{S_i\cup\{x\},\{x\},[n]\}$. Next, we claim that either $|\cH_2|<2^m+2$ or $\cH_3 \subseteq \cU_{S_i\cup\{x\}} \cup \{\emptyset\}$. Indeed,  if $\cH_3$ contains a set $[n]\setminus \{y\}$ with $y\in S_i$, then $\cH_2$ cannot contain sets from $B^-_{\{y\},S_i}$, so its size is at most $2^{m-1}+3$. If  $|\cH_2|<2^m+2=F'(n,2)$, then we are done. Otherwise $|\cH_3|\le |\cU_{S_i\cup\{x\}} \cup \{\emptyset\}|=2^m+1<F'(n,2)$ and we are done.

\bigskip
\textsc{Case II} for any pair $x\in\cH_2,[n]\setminus \{x\}\in\cH_3$ we have $S_i\not\subseteq [n]\setminus \{x\}$. 

If $n$ is even then the coloring defined by $\phi'(F):=\phi([n]\setminus F)$ has color classes of the same size as those of $\phi$, and $S_i$ is replaced by $[n]\setminus S_i$, a set also of size $n/2$ and then we are back to the previous case. Thus we can assume that $n=2m+1$ is odd. Then as all sets of $\cH_2$ of size between 2 and $n$ are comparable to $[n]\setminus \{x\}$ and all sets of $\cH_2$ are comparable to $S_i$, we obtain that $\cH_2\subseteq \cD_{S_i\setminus \{x\}} \cup \{[n]\}$, so $|\cH_2|\le 2^{m-1}+1<F'(n,2)$ and we are done.

The inequality $F'(n,2)\le F(n,3)$ follows from Proposition \ref{fourf} and Corollary \ref{2} \textbf{(i)}. This proves part $\textbf{(i)}$ of the theorem.

\medskip 

To show \textbf{(ii)}, we need to prove that for any coloring $\phi:B_n\rightarrow [3]$ that does not admit a rainbow induced copy of $A_3$, one of the color classes has Lubell mass at most $1+\sqrt{2}+o(1)$. Again, we assume that the color class $\cH_1$ contain at least half of $\binom{[n]}{\lfloor n/2\rfloor}$.

Claim \ref{23claim} 
implies that either one of the color classes of $\phi$ has Lubell mass at most $2.1\le 1+\sqrt{2}+o(1)=G'(n,2)+o(1)$ %and then by definition $\min\{\lambda_n(\cH_2),\lambda_n(\cH_3)\}\le G'(n,2)+o(1)=1+\sqrt{2}+o(1)$ 
and we are done, or for any incomparable pair $H_2\in \cH_2, H_3\in \cH_3$ we have that $H_2$ is a singleton and $H_3$ is its complement. 
%If there is no such incomparable pair, then by definition $\min\{\lambda_n(\cH_2),\lambda_n(\cH_3)\}\le G'(n,2)+o(1)=1+\sqrt{2}+o(1)$ and we are done. 
Let $R=\{x\in [n]: \{x\}\in\cH_2, \, [n]\setminus \{x\}\in \cH_3\}$ and we define $\beta$ by $|R|=\beta n$. Note that $R\neq \emptyset$, otherwise $\cH_2$ and $\cH_3$ are mutually comparable. Observe that if we let $\cH_3'=\cH_3\setminus \{[n]\setminus \{x\}:x\in R\}$, then $\cH_2$ and $\cH_3'$ are mutually comparable and $\lambda_n(\cH_3\setminus \cH_3')=\beta$. 
%Define $R=\{x\in \cH_2:[n]\setminus \{x\}\in \cH_3\}$ with $|R|=\beta n$.

%Let us apply Lemma \ref{mcs} to $\cH_2$ and $\cH_3'$ to obtain a chain
Let us consider the core chain
$\emptyset=S_0,S_1,\dots,S_j=[n]$ of $\cH_2$ and $\cH_3'$. As $S_1$ is comparable to each of the singletons in $\cH_2$, we know that $B^-_{\emptyset,S_1}$ is disjoint with $\cH_3'$ and if we define $\alpha$ by $|S_1|=\alpha n$, then we have $0\le \beta \le \alpha\le 1$. We claim that any $H\in \cH_2$ is a subset of $S_1\setminus R$ unless $H=[n]$ or $H=\{x\}$ for some $x\in R$. Indeed, if $H$ is none of the above members of $\cH_2$, then by
definition of the core chain, $H$ is comparable to $S_1$, thus we have either $S_1\subsetneq H$ or $x\in H\subseteq S_1$ for some $x\in R$. In both cases, $H$ contains at least one element $x$ of $R$. Then $H$ is incomparable to $[n]\setminus \{x\}$. But $[n]\setminus \{x\}\in \cH_3$ if $x\in R$, and, by Claim \ref{23claim}, $[n]\setminus \{x\}$ is incomparable only to one member of $\cH_2$, namely $\{x\}$. This is a contradiction as $H$ and $[n]\setminus \{x\}$ are incomparable.  We obtained that

\vspace{2mm}

$\bullet$ $\cH_2\subseteq \cD_{S_1\setminus R}\cup \{[n]\}\cup \{\{x\}:x\in R\}$ holds.

\vspace{2mm}

Also, $\cH_3\setminus \cH_3'=\{[n]\setminus \{x\}:x\in R\}$. By definition of a core chain, all sets in $\cH'_3$ are comparable to $S_1$, and since $\cH_2$ contains sets from $B_{\emptyset,S_1}$, we must have $\cH_3\cap B_{\emptyset,S_1}=\emptyset$, thus we obtain

\vspace{2mm}

$\bullet$ $\cH_3\subseteq \cU_{S_1}\cup \{\emptyset\}\cup \{[n]\setminus \{x\}:x\in R\}$.

\bigskip

\textsc{Case I} $\alpha \le 1/2$.

\medskip 

Using Lemma \ref{cubes2} with $a=0$, $b=(1-\alpha)n$, we obtain $\lambda_n(\cD_{S_1}\setminus \{\emptyset\})<1$. Therefore if $\emptyset$ or $[n]$ does not belong to $\cH_2$, then $\lambda_n(\cH_2)<2<1+\sqrt{2}+o(1)$ and we are done. Otherwise 

\ 

$\bullet$ $\lambda_n(\cH_2)\le \lambda_n([n])+\lambda_n(\{\{x\}:x\in R\})+\lambda_n(B_{\emptyset,S_1\setminus R})\le 1+\beta+\frac{1}{1-(\alpha-\beta)}+o(1)$ using Lemma \ref{cubes2} with $a=0$, $b=(1-(\alpha-\beta))n$.

\medskip 

$\bullet$ $\cH_3\subseteq (\cU_{S_1}\setminus \{[n]\}) \cup \{[n]\setminus \{x\}:x\in R\}$ and thus $\lambda_n(\cH_3)\le \frac{1}{\alpha}-1+\beta +o(1)$ using again Lemma \ref{cubes2} this time with $a=\alpha n$, $b=0$.

So $\min \{\lambda_n(\cH_2),\lambda_n(\cH_3)\}\le \min\{1+\beta+\frac{1}{1-(\alpha-\beta)},\frac{1}{\alpha}-1+\beta \}+o(1)$.

\medskip 

\textsc{Case II} $\alpha \ge 1/2$.

\medskip 

It cannot happen that both $\emptyset$ and $[n]$ belong to $\cH_2$ as then $\lambda_n(\cH_3)<\frac{1}{\alpha}-1+\beta+o(1)\le 2+o(1)$. If both $\emptyset$ and $[n]$  belong to $\cH_3$, then by Lemma \ref{cubes2}, we have

\ 

$\bullet$ $\lambda_n(\cH_2)\le \beta+\frac{1}{1-(\alpha-\beta)}-1+o(1)$.

\medskip 

$\bullet$ $\lambda_n(\cH_3)\le \frac{1}{\alpha}+1+\beta +o(1)$.

\ 

In this case, $\min \{\lambda_n(\cH_2),\lambda_n(\cH_3)\}\le \min\{\beta+\frac{1}{1-(\alpha-\beta)}-1,\frac{1}{\alpha}+1+\beta \}+o(1)$.

\medskip

If exactly one of $\emptyset$ and $[n]$ belong to $\cH_2$ and the other to $\cH_3$, then Lemma \ref{cubes2} yields $\min \{\lambda_n(\cH_2),\lambda_n(\cH_3)\}\le \min \{\beta+\frac{1}{1-(\alpha-\beta)},\frac{1}{\alpha}+\beta\}+o(1)$.

\vspace{2mm}

Therefore, the following claim finishes the proof of Theorem \ref{23} \textbf{(ii)}.

\begin{claim}\label{tech} Suppose we have two real numbers $0 \le \beta \le \alpha \le 1$.

\vspace{3mm} 

\textbf{(a)} If $\alpha \le \frac{1}{2},$ then $\min \{1+\beta+\frac{1}{1-(\alpha-\beta)}, \frac{1}{\alpha}-1+\beta\} \le 1 + \sqrt{2}$.
\vspace{2mm}

\textbf{(b)} If $\alpha \ge \frac{1}{2},$ then $\min \{\beta+\frac{1}{1-(\alpha-\beta)}-1,\frac{1}{\alpha}+1+\beta\} \le 1 + \sqrt{2}$.

\textbf{(c)} If $\alpha\ge \frac{1}{2}$, then $\min \{\beta+\frac{1}{1-(\alpha-\beta)},\frac{1}{\alpha}+\beta\}\le 1+\sqrt{2}$.
\end{claim}

\begin{proof} We start with a proposition.

\begin{proposition}\label{ineq1} For $0 \le \beta \le 1/2$ we have 

$$\beta (-\beta^2 + (1 + 2\sqrt{2})\beta -2) \le 0$$

and 

$$
\beta + \frac{2 + \sqrt{2} - \beta}{ - \beta^2 + (1 + \sqrt{2})(\beta + 1)} \le \sqrt{2}.
$$
\end{proposition}

\begin{proof}

Observe that the two inequalities are equivalent. Indeed, as $- \beta^2 + (1 + \sqrt{2})(\beta + 1) > 0$ for $\beta \in [0,1]$, multiplying both sides of the second inequality with $- \beta^2 + (1 + \sqrt{2})(\beta + 1)$ and reorganize it we obtain the first one. 
To see the first inequality, observe that $\beta$ is non-negative while $-\beta^2 + (1 + 2\sqrt{2})\beta -2$ is always negative in case $0\le\beta\le 1/2$, and this proves both statements of the claim.
\end{proof}

Now we continue with the proof of \textbf{(a)}. We prove by contradiction. Let us suppose that we have $\frac{1}{\alpha}-1+\beta > \sqrt{2} +1$
or equivalently (as $\alpha \ge 0$ and $\beta \le 1$)  
\begin{equation}\label{alfa}\alpha < \frac{1}{2 + \sqrt{2}- \beta}.\end{equation}
Our goal is to prove $1+\beta+\frac{1}{1-(\alpha-\beta)} \le 1 + \sqrt{2}$. Observe that if we fix $\beta$, then increasing $\alpha$ would increase $1+\beta+\frac{1}{1-(\alpha-\beta)}$. Thus we have $1+\beta+\frac{1}{1-(\alpha-\beta)}\le 1+\beta+\frac{1}{1-(\frac{1}{2 + \sqrt{2}- \beta}-\beta)}\le 1 +\sqrt{2}$, where the first inequality follows from \eqref{alfa} and the second inequality follows from rearranging $\frac{1}{1-(\frac{1}{2 + \sqrt{2}- \beta}-\beta)}$ to $\frac{2+\sqrt{2}-\beta}{-\beta^2+(1+\sqrt{2})(\beta+1)}$ and applying Proposition \ref{ineq1}.

\medskip 

To prove \textbf{(b)}, first we show that if $\beta>1/2$, then $$\beta+\frac{1}{1-(\alpha-\beta)}-1\le 1 + \sqrt{2}.$$ Indeed, as $\alpha\le 1$, we have $\frac{1}{1-(\alpha-\beta)}<2$, thus $\beta+\frac{1}{1-(\alpha-\beta)}-1<2<1+\sqrt{2}$.
%otherwise $\frac{1}{1-(\alpha-\beta)}>2+\sqrt{2}-\beta>2$, thus $1-(\alpha-\beta)<1/2$, hence $\alpha-\beta>1/2$. As $\alpha\le 1$, we are done with the proof in this case.

If $\beta\le 1/2$, we proceed similarly to the proof of \textbf{(a)}. Let us suppose indirectly that we have $\frac{1}{\alpha}+1+\beta > 1 + \sqrt{2}$, i.e. $\alpha\le 1/(\sqrt{2}-\beta)$. Our goal is to prove $\beta+\frac{1}{1-(\alpha-\beta)}\le 2+\sqrt{2}$. Observe that if we fix $\beta$, then increasing $\alpha$ would increase $\beta+\frac{1}{1-(\alpha-\beta)}$. Thus we have $$\beta+\frac{1}{1-(\alpha-\beta)}\le \beta+\frac{1}{1-(\frac{1}{\sqrt{2}-\beta}-\beta)}=\beta+\frac{\sqrt{2}-\beta}{-\beta^2+(\sqrt{2}-1)(\beta+1)}.$$ To show $\beta+\frac{\sqrt{2}-\beta}{-\beta^2+(\sqrt{2}-1)(\beta+1)}\le \sqrt{2}+2$, we want to multiply both sides with $-\beta^2+(\sqrt{2}-1)(\beta+1)$. We can do that as it is positive if $0\le\beta\le 1/2$. The resulting inequality $\beta (-\beta^2 + (1 + 2\sqrt{2})\beta -2) \le 0$ holds by Proposition \ref{ineq1}. 

To see \textbf{(c)}, observe that for fixed $\beta$, the first expression is increasing, the second one is decreasing in $\alpha$. Therefore the minimum is maximized when the two expressions are equal, i.e. $\alpha=1-(\alpha-\beta)$ and thus $\alpha=\frac{1+\beta}{2}$. The function $\beta+\frac{2}{1+\beta}$ has maximum $2\le 1+\sqrt{2}$ in the interval $[0,1]$.
%We want to show this quantity is at most $\sqrt{2}+2$. Observe that $-\beta^2+(\sqrt{2}-1)(\beta+1)$ is positive if $0\le\beta\le 1/2$, thus we can multiply both sides with it. Thus we obtain $$\beta (-\beta^2 + (1 + 2\sqrt{2})\beta -2) \le 0,$$ which holds by Proposition \ref{ineq1}.

%As $\beta \le \alpha$ we know that $\frac{1}{1-(\alpha - \beta)}$ is strictly increasing in $\beta$ for $\beta \le \alpha \le 1$, so it would be enough to prove that $1+\beta+\frac{1}{1-(\alpha-\beta)} \le 1 + \sqrt{2}$ for $\alpha = \frac{1}{2 + \sqrt{2}- \beta}$. And this is exactly the statement of Proposition \ref{ineq1}.
\end{proof}
\end{proof}

\medskip

Our final proposition is a construction that shows that $\lim_{n,k}\frac{F'(n,k)}{2^{\lfloor n/2\rfloor}}=\infty$, if both $n$ and $k$ tend to infinity.

\begin{proposition}
For any  integer $k\ge 2$ let $l_k=\lfloor \frac{\log (k-1)}{2}\rfloor$. Then we have $$F'(n,k) \ge (2^{l_k}-o(1))2^{\lfloor n/2\rfloor}.$$
\end{proposition}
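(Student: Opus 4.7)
The plan is to exhibit an explicit partial $k$-coloring of $B_n$ having $k$ color classes, each of size at least $(2^{l_k}-o(1))2^{\lfloor n/2\rfloor}$, and admitting no rainbow copy of $A_k$. The construction will be built around a small control subset $A\subseteq[n]$ of size of order $2l_k+O(\log k)$ (so that $2^{|A|}$ is comparable to $k$), together with an anchor $Z\subseteq B:=[n]\setminus A$ of size $\lfloor|B|/2\rfloor$. For each $S\subseteq A$ I will split the fiber $\cF_S=\{F:F\cap A=S\}$, using $Z$, into a ``lower'' label $\cF_S^{\downarrow}=\{S\cup T:T\subseteq Z\}$ and an ``upper'' label $\cF_S^{\uparrow}=\{S\cup T:T\supseteq Z\}$, each of cardinality approximately $2^{|B|/2}$. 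This produces $2\cdot 2^{|A|}$ labels in total.

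The central structural observation I will exploit is the following forced-comparability property: whenever $S\subseteq S'$ in $B_A$, every set of $\cF_S^{\downarrow}$ is contained in every set of $\cF_{S'}^{\uparrow}$, because then $T\subseteq Z\subseteq T'$ and $S\subseteq S'$. Such a pair of labels therefore cannot contribute two distinct elements to any antichain.

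Next I will merge the $2^{|A|+1}$ labels into exactly $k$ color classes, each consisting of roughly $2^{|A|+1}/k$ labels. A balanced computation (choosing $|A|$ slightly larger than $2l_k$) shows that the resulting class size is approximately $2^{\lfloor n/2\rfloor+l_k}$, matching the desired lower bound. The crucial feature of the merger will be that every transversal---i.e., any selection of one label from each color class---contains at least one compatible lower/upper pair from the observation above, thereby ruling out any rainbow $A_k$.

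The main obstacle will be the combinatorial design of such a transversal-covering merger in the full range $k\in(4^{l_k},4^{l_k+1}]$. My plan is to arrange the labels along the Boolean lattice $B_A$, matching lowers attached to small subsets $S$ with uppers attached to the corresponding large supersets $S'$ in a pattern modelled on a symmetric chain decomposition of $B_A$, so that no transversal can avoid selecting a lower-upper pair with $S\subseteq S'$. Verifying that this scheme simultaneously achieves the required color class size, the correct number of colors, and the forced-pair coverage for every transversal is the technically delicate step.
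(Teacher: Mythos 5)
Your construction is a genuinely different route from the paper's, and it has a real gap at the step you flag as ``technically delicate.'' The paper instead picks $k-1$ sets $F_1,\dots,F_{k-1}$ of size $\lfloor n/2\rfloor+l_k$ (chosen randomly so that all pairwise intersections have size at most $0.26n$), assigns color $i$ to $\cD_{F_i}\setminus\bigcup_{j<i}\cD_{F_j}$ for $i<k$, and color $k$ to $\bigcup_i(\cU_{F_i}\setminus\{F_i\})$. Any set of color $k$ lies above some $F_i$ and is hence comparable to \emph{every} set of color $i$, so no rainbow $A_k$ exists; the intersection bound keeps every class of size $(2^{l_k}-o(1))2^{\lfloor n/2\rfloor}$. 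The absence of rainbow $A_k$ is an immediate consequence of the design, with no transversal-covering condition to verify.

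The gap in your plan is that the ``transversal-covering merger'' does not exist with the required class sizes. In your compatibility graph on labels, the only edges are between $(S,\downarrow)$ and $(S',\uparrow)$ with $S\subseteq S'$; two $\downarrow$-labels are never adjacent, nor are two $\uparrow$-labels. Hence if every one of your $k$ color classes contains at least one $\downarrow$-label, the all-$\downarrow$ transversal is an independent set and yields a rainbow $A_k$; symmetrically for $\uparrow$. So one class must consist entirely of $\uparrow$-labels and another entirely of $\downarrow$-labels. Now a bad transversal only needs to pick from those two classes an upper $(S',\uparrow)$ and a downer $(S,\downarrow)$ with $S\not\subseteq S'$, which is possible unless the $\downarrow$-only class has all of its $S$'s contained in $\bigcap_{S'\in U}S'$ where $U$ is the upper class's index set. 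With $|A|\approx 2l_k+\Theta(\log k)$ and roughly $2^{|A|+1}/k$ labels per class, this forces the $\downarrow$-only class to be essentially $\{(\emptyset,\downarrow)\}$, whose size is only $2^{\lfloor(n-|A|)/2\rfloor}\ll 2^{\lfloor n/2\rfloor+l_k}$; the symmetric chain decomposition idea runs into exactly this obstruction (the chain $\{\emptyset\}$ must be its own class). In short, the forced-comparability relation you identified is too sparse: it gives a bipartite graph between $\downarrow$- and $\uparrow$-labels, and no balanced $k$-partition of such a graph can make every transversal hit an edge while keeping all classes large. A fix would have to make the color classes themselves more structured (for example, taking whole down-sets $\cD_{F_i}$ rather than unions of fibers over a fixed anchor $Z$), which is precisely what the paper's construction does.
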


\begin{proof}
Fix an integer $k$. If $n$ is large enough, then we can pick $k-1$ sets $F_1,F_2,\dots, F_{k-1}$ of size $\lfloor n/2\rfloor +l_k$ such that $|F_i \cap F_j|\le 0.26n$ for every $i<j\le k-1$ (take sets uniformly at random from the $(\lfloor n/2\rfloor+l_k)$th level of $B_n$). Let us define a coloring $\phi$ by $\phi(A)=k$ if $A\in \cup_{i=1}^{k-1}(\cU_{F_i}\setminus \{F_i\})$ and $\phi(A)=i$ if $A\in \cD_i\setminus \cup_{j=1}^{i-1}\cD_j$ for all $1\le i \le k-1$.

Observe that $\phi$ does not admit a strong rainbow copy of $A_k$ as if $\phi(A)=k$ with $A\in \cU_i$ belongs to a rainbow antichain, then color $i$ is missing. Also, $$|\phi^{-1}(i)|\ge 2^{l_k}2^{\lfloor n/2\rfloor}-(k-2)2^{0.26n}=(2^{l_k}-o(1))2^{\lfloor n/2\rfloor}$$ for all $i=1,2,\dots, k-1$ and $$|\phi^{-1}(k)|\ge (k-1)2^{-l_k}2^{\lfloor n/2\rfloor}-k-\binom{k-1}{2}2^{0.26n}\ge (2^{l_k}-o(1))2^{\lfloor n/2\rfloor}. $$
\end{proof}

\textbf{Funding}: Research supported by the National Research, Development and Innovation Office - NKFIH under the grants K 116769, K 132696, KH 130371, SNN 129364, FK 132060, and KKP-133819, by the J\'anos Bolyai Research Fellowship of the Hungarian Academy of Sciences and the Taiwanese-Hungarian Mobility Program of the Hungarian Academy of Sciences, by Ministry of Science and Technology Project-based Personnel Exchange Program 107 -2911-I-005 -505.

\end{document}